\documentclass[10pt, a4paper]{amsart}
\usepackage{geometry} 
\usepackage{url}
\usepackage{palatino}
\usepackage{amsthm}
\usepackage{amssymb}
\usepackage{latexsym}
\usepackage{amsfonts}
\usepackage{amsmath}
\usepackage{amstext}
\usepackage{accents}
\usepackage{cancel}
\usepackage{color}
\usepackage{soul}
\usepackage{enumerate}
\usepackage{eucal}
\usepackage{amscd}
\usepackage{hyperref}
\usepackage{graphicx}
\usepackage{verbatim}
\usepackage{caption}
\usepackage{enumitem}

\usepackage[usenames]{xcolor}
\setstcolor{red}

\definecolor{greenish}{RGB}{50,160,0}

\newtheorem{theorem}{Theorem}[section]

\newtheorem{lemma}[theorem]{Lemma}

\newtheorem{corollary}[theorem]{Corollary}

\theoremstyle{definition}
\newtheorem{definition}[theorem]{Definition}
\newtheorem{example}[theorem]{Example}

\theoremstyle{remark}
\newtheorem{remark}[theorem]{Remark}

\numberwithin{equation}{section}

\begin{document}
\title[Minimal surfaces in ${\mathbb{R}}^{4}$ and holomorphic null curves in ${\mathbb{C}}^{4}$]{Minimal surfaces in ${\mathbb{R}}^{4}$ foliated by conic sections and \\ parabolic rotations of holomorphic null curves in ${\mathbb{C}}^{4}$}

\author[Hojoo Lee]{Hojoo Lee}
\address{Center for Mathematical Challenges, Korea Institute for Advanced Study, Hoegiro 85, Dongdaemun-gu, Seoul 02455, Korea}
\email{momentmaplee@gmail.com}

\begin{abstract}
Using the complex parabolic rotations of holomorphic null curves in ${\mathbb{C}}^{4}$, we transform minimal surfaces in 
Euclidean space ${\mathbb{R}}^{3} \subset {\mathbb{R}}^{4}$ to a family of degenerate minimal surfaces in Euclidean space ${\mathbb{R}}^{4}$. 
Applying our deformation to holomorphic null curves in ${\mathbb{C}}^{3} \subset {\mathbb{C}}^{4}$ induced by helicoids in ${\mathbb{R}}^{3}$, we discover new minimal surfaces in ${\mathbb{R}}^{4}$ foliated by conic sections with eccentricity grater than $1$: hyperbolas or straight lines. Applying our deformation to  holomorphic null curves in ${\mathbb{C}}^{3}$ induced by catenoids in ${\mathbb{R}}^{3}$, we can rediscover the Hoffman-Osserman  
catenoids in ${\mathbb{R}}^{4}$ foliated by conic sections with eccentricity smaller than $1$: ellipses or circles. We prove the
existence of minimal surfaces in ${\mathbb{R}}^{4}$ foliated by ellipses, which converge to circles at infinity. We construct minimal 
surfaces in ${\mathbb{R}}^{4}$ foliated by parabolas: conic sections which have eccentricity $1$.
\end{abstract}

\keywords{minimal surfaces, conic sections, holomorphic null curves}
 
 \maketitle
   
  \begin{center}
  {\small{\textit{To the memory of Professor Ahmad El Soufi}}}
  \end{center}
 
  \bigskip

 \section{Introduction}
  
  It is a fascinating fact that any simply connected minimal surface in ${\mathbb{R}}^{3}$ admits a $1$-parameter family of minimal isometric deformations,  called the associate family. Rotating the holomorphic null curve in ${\mathbb{C}}^{3}$ induced by the given  minimal surface in ${\mathbb{R}}^{3}$ realizes such isometric deformation of minimal surfaces. For instance, catenoids (foliated by circles) belong to the associate families of helicoids (foliated by lines) in ${\mathbb{R}}^{3}$. 

R. Schoen \cite{Sch83} characterized catenoids as the only complete, embedded minimal surfaces  in  ${\mathbb{R}}^{3}$ with finite topology and two ends. F. L\'{o}pez  and A. Ros \cite{LR91}
 used the so called L\'{o}pez-Ros deformation (See Example \ref{LRdef}) of holomorphic null curves in ${\mathbb{C}}^{3}$ to prove that planes and catenoids are the only embedded complete minimal surfaces in ${\mathbb{R}}^{3}$ with finite total curvature and genus zero.  
The L\'{o}pez-Ros deformation has various interesting geometric applications \cite{BB2011, LR91, MP2012, PR1993, PR2002, Ros1996}. Meeks and Rosenberg \cite{MeRo05} proved that helicoids are  the unique complete, simply connected, properly embedded, nonplanar minimal surface in 
${\mathbb{R}}^{3}$ with one end. 

I. Castro and F. Urbano \cite{CU1999} established the uniqueness of $n$-dimensional Lagrangian catenoid \cite[Section III. 3. B.]{HL1982} in ${\mathbb{R}}^{2n}$. In particular, in ${\mathbb{R}}^{4}$, the Lagrangian catenoid can be identified as the holomorphic  curve  in ${\mathbb{C}}^{2}$:  $\left\{ \left({\zeta}, \frac{\lambda}{\zeta} \right) \in {\mathbb{C}}^{2}  \; \vert \;   \zeta \in  {\mathbb{C}} - \{0\}   \right\}$ for a  constant $\lambda \in  {\mathbb{C}} - \{0\}$.
Lagrangian catenoids are the only non-planar special Lagrangian surfaces foliated by circles. S.-H. Park \cite{Park2015} showed that 
a non-planar circle-foliated minimal surface in ${\mathbb{R}}^{n \geq 3}$ should be a part of Lagrangian catenoid in ${\mathbb{R}}^{4}$ or 
 classical minimal surfaces in ${\mathbb{R}}^{3}$: catenoids and Riemann's minimal surface \cite{MP2015, Riem1868} foliated by circles or lines. 

R. Osserman \cite[Theorem 9.4]{Oss86} established that catenoids and Enneper's surfaces are the only complete minimal 
surfaces in  ${\mathbb{R}}^{3}$ with  total curvature $-4 \pi$.  The Lagrangian catenoids in ${\mathbb{R}}^{4}$ have total 
curvature $-4 \pi$.  More generally, D. Hoffman and R. Osserman  \cite[Chapter 6]{HO80} classified 
all complete minimal surfaces in ${\mathbb{R}}^{n \geq 3}$ with total curvature $-4 \pi$. The Hoffman-Osserman list includes a family of minimal surfaces  in ${\mathbb{R}}^{4}$ foliated by ellipses or circles.   

 We first review the classical deformations of minimal surfaces in ${\mathbb{R}}^{n \geq 3}$ induced by deformations of holomorphic null curves 
in ${\mathbb{C}}^{n}$.  We use the complex parabolic rotations (Lemma \ref{deform 1})  of holomorphic null curves  in ${\mathbb{C}}^{4}$ 
to show that the deformation of classical catenoids in  ${\mathbb{R}}^{3}$  to the Hoffman-Osserman catenoids in  ${\mathbb{R}}^{4}$ can be 
generalized to deformations of arbitrary minimal surfaces in ${\mathbb{R}}^{3}$ to a family of degenerate minimal surfaces 
in ${\mathbb{R}}^{4}$ (Theorem \ref{thm:main}). As an application of our deformation, generalizing helicoids in ${\mathbb{R}}^{3}$ foliated by 
lines, we construct new minimal surfaces in ${\mathbb{R}}^{4}$ foliated by conic sections with eccentricity grater than $1$: hyperbolas or lines
 (Example \ref{deformation of helicoids}).  We establish the existence of minimal surfaces in ${\mathbb{R}}^{4}$ foliated by ellipses, which 
 converges to circles at infinity (Theorem \ref{twocircles}).  Finally, we also provide examples of minimal surfaces in ${\mathbb{R}}^{4}$ 
 foliated by conic sections with eccentricity $1$: parabolas  (Example \ref{complex parabola}).

 \section{Deformations of minimal surfaces   in ${\mathbb{R}}^{n}$ and holomorphic curves in ${\mathbb{C}}^{n}$}

 Since two dimensional minimal surfaces in real Euclidean space is parametrized by conformal harmonic mappings, they can be constructed from 
  holomorphic null curves in complex Euclidean space. Let's review classical deformations of simply connected minimal surfaces 
  in ${\mathbb{R}}^{n}$ induced by deformations of the holomorphic null curves in ${\mathbb{C}}^{n}$. 

 \begin{example}[\textbf{Associate family of minimal surfaces in ${\mathbb{R}}^{3}$}]
Let  ${\mathbf{X}}:\Sigma \rightarrow {\mathbb{R}}^{3}$ denote a conformal harmonic immersion of a simply connected minimal surfaces $\Sigma$ induced by the holomorphic null curve $ \left({\phi}_{1}, {\phi}_{2}, {\phi}_{3} \right)$:
\[
  {\mathbf{X}}(\zeta) =   {\mathbf{X}}({\zeta}_{0}) + \left(  \textrm{Re} \int_{{\zeta}_{0}}^{\zeta}  {\phi}_{1}(\zeta)  d\zeta,  \textrm{Re} \int_{{\zeta}_{0}}^{\zeta}  {\phi}_{2}(\zeta)  d\zeta,  \textrm{Re}  \int_{{\zeta}_{0}}^{\zeta}   {\phi}_{3}(\zeta)  d\zeta \right), 
\]
or concisely,
\[
  {\mathbf{X}}(\zeta) = \left(  \textrm{Re} \int  {\phi}_{1}(\zeta)  d\zeta,  \textrm{Re} \int  {\phi}_{2}(\zeta)  d\zeta,  \textrm{Re} \int  {\phi}_{3}(\zeta)  d\zeta \right).
\]
 Given a real angle constant $\theta$, we rotate the initial holomorphic null curve $ \left({\phi}_{1}, {\phi}_{2}, {\phi}_{3} \right)$ to associate 
 by the new holomorphic null curve $ \left({\widetilde{\phi}}_{1},   {\widetilde{\phi}}_{2}, {\widetilde{\phi}}_{3} \right)$:
 \[
 \begin{bmatrix} 
  {\phi}_{1} \\  {\phi}_{2} \\ {\phi}_{3}
 \end{bmatrix} 
 \mapsto 
  \begin{bmatrix} 
   {\widetilde{\phi}}_{1} \\   {\widetilde{\phi}}_{2} \\  {\widetilde{\phi}}_{3}
 \end{bmatrix} 
 =
 \begin{bmatrix} 
e^{-i \theta}  & 0 &  0 \\ 
0 & e^{-i \theta} &  0 \\ 
 0 & 0 & e^{-i \theta}
 \end{bmatrix} 
  \begin{bmatrix} 
 {\phi}_{1} \\  {\phi}_{2} \\ {\phi}_{3}
 \end{bmatrix}.
 \]
The holomorphic null curve $ \left({\widetilde{\phi}}_{1},   {\widetilde{\phi}}_{2}, {\widetilde{\phi}}_{3} \right)$ induces the minimal surface  ${\Sigma}^{\theta}$ is given by the conformal harmonic immersion 
\[
  {\mathbf{X}}^{\theta}(\zeta) =  \left(  \textrm{Re} \int  {\widetilde{\phi}}_{1}(\zeta)  d\zeta,   \textrm{Re} \int  {\widetilde{\phi}}_{2}(\zeta)  d\zeta,  \textrm{Re} \int  {\widetilde{\phi}}_{3}(\zeta)  d\zeta   \right).
\]
The minimal surface ${\Sigma}^{\frac{\pi}{2}}$ is called the conjugate surface of ${\Sigma}^{0}= {\Sigma}$. Under the conjugate transformation ${\Sigma} \mapsto {\Sigma}^{\frac{\pi}{2}}$, the lines of curvature on $\Sigma$ map to the asymptotic lines on ${\Sigma}^{\frac{\pi}{2}}$ and the asymptotic lines on $\Sigma$ map to the lines of curvature on ${\Sigma}^{\frac{\pi}{2}}$.
\end{example}

 \begin{example}[\textbf{Lawson's isometric deformation  \cite{Law71} of minimal surfaces in ${\mathbb{R}}^{3}$ to minimal surfaces in ${\mathbb{R}}^{6}$}]
 Given a simply connected minimal surface $\Sigma$ in ${\mathbb{R}}^{3}$, we construct Lawson's two-parameter family of minimal 
 surfaces in ${\mathbb{R}}^{6}$, which are isometric to $\Sigma$. Suppose that the minimal surface  $\Sigma$ is obtained by integrating
  the holomorphic null curve $ \left({\phi}_{1},   {\phi}_{2}, {\phi}_{3} \right)$ in ${\mathbb{C}}^{3}$. First,  given a real angle constant $\beta$, we obtain
 the minimal surface ${\Sigma}_{\beta}$ in ${\mathbb{R}}^{6}$ induced by the holomorphic null curve in ${\mathbb{C}}^{6}$: 
 \[
 \left({\varphi}_{1}, \cdots, {\varphi}_{6} \right) =\cos  {\beta}  \left({\phi}_{1}, 0,  {\phi}_{2},0, {\phi}_{3}, 0 \right)
 + \sin  \beta  \left(0, -i {\phi}_{1},  0, -i {\phi}_{2}, 0, -i {\phi}_{3} \right).
 \]
We notice that the minimal surface ${\Sigma}_{\frac{\pi}{4}}$ in ${\mathbb{R}}^{6}$ can be identified with the holomorphic 
curve  $\frac{1}{\sqrt{2}} \left({\phi}_{1},   {\phi}_{2}, {\phi}_{3} \right)$ in ${\mathbb{C}}^{3}$. 
Second, given another real angle constant $\alpha$, we rotate the  holomorphic null curve $ \left({\varphi}_{1}, \cdots, {\varphi}_{6} \right)$ to get
the holomorphic null curve 
\[
 \left( \widetilde{{\varphi}_{1}}, \cdots,  \widetilde{{\varphi}_{6}} \right) = e^{-i \alpha}  \left({\varphi}_{1}, \cdots, {\varphi}_{6} \right).
\]
Integrating this, we obtain the minimal 
surface ${\Sigma}_{\left(\alpha, \beta \right)}$ in ${\mathbb{R}}^{6}$, which is isometric to ${\Sigma}$ in ${\mathbb{R}}^{3}$. 
Lawson \cite[Theorem 1]{Law71} proved  that this is the only way to deform a minimal surface ${\Sigma}$ in ${\mathbb{R}}^{3}$
 isometrically to minimal surfaces in  Euclidean space ${\mathbb{R}}^{n \geq 3}$. 
 \end{example}
  
\begin{example}[\textbf{Goursat's transformation of minimal surfaces in ${\mathbb{R}}^{3}$ \cite{Deu2013, DHS2010, G1887, LK2016}}] \label{goursat}
The Goursat deformation transforms minimal surfaces in ${\mathbb{R}}^{3}$ to minimal surfaces in ${\mathbb{R}}^{3}$.
Let  ${\mathbf{X}}(\zeta):\Sigma \rightarrow {\mathbb{R}}^{3}$ be the immersion of a simply connected minimal surfaces $\Sigma$ induced by the holomorphic null curve $ \left({\phi}_{1},   {\phi}_{2}, {\phi}_{3} \right)$.
 
For each $t \in \mathbb{R}$, we associate the holomorphic null curve $ \left({\widetilde{\phi}}_{1},   {\widetilde{\phi}}_{2}, {\widetilde{\phi}}_{3} \right)$ by the linear map:
\[
 \begin{bmatrix} 
  {\phi}_{1} \\  {\phi}_{2} \\ {\phi}_{3}
 \end{bmatrix} 
 \mapsto 
  \begin{bmatrix} 
   {\widetilde{\phi}}_{1} \\   {\widetilde{\phi}}_{2} \\  {\widetilde{\phi}}_{3}
 \end{bmatrix} 
 =
 \begin{bmatrix} 
\cosh t &- i \sinh t   &  0 \\ 
 i \sinh t & \cosh t &  0 \\ 
 0 & 0 & 1 
 \end{bmatrix} 
  \begin{bmatrix} 
 {\phi}_{1} \\  {\phi}_{2} \\ {\phi}_{3}
 \end{bmatrix}.
 \]
This induces the conformal harmonic map of the minimal surface 
 \[
  {\mathbf{X}}^{t}(\zeta) = \left(  \textrm{Re} \int  {\widetilde{\phi}}_{1}(\zeta)  d\zeta,   \textrm{Re} \int  {\widetilde{\phi}}_{2}(\zeta)  d\zeta,  \textrm{Re} \int  {\widetilde{\phi}}_{3}(\zeta)  d\zeta   \right).
\]
The nullity of the holomorphic curve $ \left({\widetilde{\phi}}_{1},   {\widetilde{\phi}}_{2}, {\widetilde{\phi}}_{3} \right)$ comes from 
 the identity, for $t \in \mathbb{R}$,  
\[ 
{z_1}^2 +{z_2}^2  =  {\widetilde{z_1}\,}^{2} + {\widetilde{z_2}\,}^{2},  \; \begin{bmatrix} 
 \widetilde{z_1}   \\   \widetilde{z_2} 
 \end{bmatrix} 
 = \begin{bmatrix} 
\cosh t & - i \sinh t  \\   i \sinh t & \cosh t 
 \end{bmatrix}
 \begin{bmatrix} 
 z_1  \\   z_2  
 \end{bmatrix}
= \begin{bmatrix} 
\cos \left( i t \right) & - \sin  \left( i t \right)  \\     \sin  \left( i t \right) & \cos  \left( i t \right) 
 \end{bmatrix}
 \begin{bmatrix} 
 z_1  \\   z_2  
 \end{bmatrix}.
\]
See also \cite[Section 2]{Romon1997} by P. Romon.
\end{example}

\begin{example}[\textbf{L\'{o}pez-Ros deformation  \cite{LR91} of minimal surfaces in ${\mathbb{R}}^{3}$}]   \label{LRdef}
In terms of Weierstrass datum of holomorphic null curves, we rewrite Goursat's deformation more geometrically. (For instance, see \cite[Section 2.1.1]{PR2002}.) Let $\Sigma$ be a simply connected minimal surface in ${\mathbb{R}}^{3}$, up to translations, parametrized by the conformal harmonic map
  \begin{equation}
 \left( {\mathbf{x}}_{1}(\zeta), {\mathbf{x}}_{2}(\zeta), {\mathbf{x}}_{3}(\zeta)  \right) = \left(  \textrm{Re} \int  {\phi}_{1}(\zeta)  d\zeta,   \textrm{Re} \int  {\phi}_{2}(\zeta)  d\zeta,  \textrm{Re} \int  {\phi}_{3}(\zeta)  d\zeta \right),
  \end{equation}
  where the curve $  \left(     {\phi}_{1}, \,   {\phi}_{2}, \,   {\phi}_{3}\right) $ is determined by the Weierstrass data $(G(\zeta), \Psi(\zeta) d\zeta):$ 
\[
  \left(     {\phi}_{1}, \,   {\phi}_{2}, \,   {\phi}_{3}\right) = 
   \left(  \frac{1}{2}  \left(1 -G^2 \right)  \Psi, \,  \frac{i}{2} \left(1 +G^2 \right)  \Psi, \,   G  \Psi  \right).
\]
Geometrically, the meromorphic function $G$ is the complexified Gauss map under the stereographic projection of the induced unit normal on the minimal surface. Given a constant $\lambda>0$, taking $t= - \ln \lambda$ in Goursat transfromation (in Example \ref{goursat}), we have 
  the linear deformation of holomorphic null curves:
\[
 \begin{bmatrix} 
  {\phi}_{1} \\  {\phi}_{2} \\ {\phi}_{3}
 \end{bmatrix} 
 \mapsto 
  \begin{bmatrix} 
   {\widetilde{\phi}}_{1} \\   {\widetilde{\phi}}_{2} \\  {\widetilde{\phi}}_{3}
 \end{bmatrix} 
 =
 \begin{bmatrix} 
  \frac{1}{2}  \left(   \frac{1}{\lambda} +  \lambda   \right) & -   \frac{i}{2}  \left(   \lambda -  \frac{1}{\lambda}   \right)  &  0 \\ 
\frac{i}{2}  \left(   \lambda -  \frac{1}{\lambda}   \right)  &  \frac{1}{2}  \left(   \frac{1}{\lambda} +  \lambda   \right) &  0 \\ 
 0 & 0 & 1 
 \end{bmatrix} 
  \begin{bmatrix} 
 {\phi}_{1} \\  {\phi}_{2} \\ {\phi}_{3}
 \end{bmatrix}.
 \]
This induces the conformal harmonic map of the minimal surface ${\Sigma}^{\lambda}:$ 
 \[
 \left( {\mathbf{x}}^{\lambda}_{1}(\zeta), {\mathbf{x}}^{\lambda}_{2}(\zeta), {\mathbf{x}}^{\lambda}_{3}(\zeta)  \right) = 
 \left(  \textrm{Re} \int  {\phi}^{\lambda}_{1}(\zeta)  d\zeta,   \textrm{Re} \int  {\phi}^{\lambda}_{2}(\zeta)  d\zeta,  \textrm{Re} \int  {\phi}^{\lambda}_{3}(\zeta)  d\zeta   \right).
\]
The Goursat deformation of holomorphic null curves yields the deformation of the Weierstrass datum of the so called L\'{o}pez-Ros deformation:
\[
 (G(\zeta), \Psi(\zeta) d\zeta) \mapsto \left(\, {G}^{\lambda}(\zeta), \, {\Psi}^{\lambda}(\zeta) d\zeta \right) = \left( \lambda G(\zeta), \frac{1}{\lambda} \Psi(\zeta) d\zeta \right). 
\]
Under the L\'{o}pez-Ros deformation ${\Sigma} \mapsto {\Sigma}^{\lambda}$, the lines of curvature maps into the lines of curvature and the asymptotic lines maps into the asymptotic lines. 
Since this deformation preserves the height differential 
\[
   {\phi}_{3} \, d\zeta =G(\zeta) \, \Psi(\zeta) d\zeta = \left(   \lambda G(\zeta)  \right) \, \left(  \frac{1}{\lambda} \Psi(\zeta) d\zeta  \right) = {\phi}^{\lambda}_{3} \, d\zeta,
\]
we immediately find that the third component of the conformal harmonic map is also preserved (up to vertical translations): 
\[
{\mathbf{x}}_{3}(\zeta)  = {\mathbf{x}}^{\lambda}_{3}(\zeta). 
\] 
Another important and useful property of the L\'{o}pez-Ros deformation is that if a component of a horizontal level set $\Sigma \cap \{ {\mathbf{x}}_{3} = 
\textrm{constant}\}$ is convex, then the same property holds for the related component at the corresponding height on ${\Sigma}^{\lambda}$. 
The L\'{o}pez-Ros deformation admits number of interesting applications \cite{BB2011, LR91, MP2012, PR1993, PR2002, Ros1996}.
\end{example}

 \section{Gauss map of degenerate minimal surfaces in ${\mathbb{R}}^{4}$}

Given a conformal harmonic immersion ${\mathbf{X}}=\left( x_0, x_1, x_2, x_3  \right) :\Sigma \rightarrow {\mathbb{R}}^{4}$ with 
the local coordinate $\zeta$,  we associate the complex curve $\phi \left(\zeta\right) : \Sigma \to {\mathbb{C}}^{4}$ defined by 
\begin{equation}  \label{to complex curve}
\phi = \left( {\phi}_{0}, {\phi}_{1},  {\phi}_{2}, {\phi}_{3} \right) := 2 \frac{d  {\mathbf{X}}}{d\zeta}.  
\end{equation}
We find that the harmonicity of $  {\mathbf{X}}$ guarantees that the curve $\phi$ is holomorphic and that the conformality of the immersion $ {\mathbf{X}}$ implies that $\phi$ lies on the complex null cone 
\begin{equation}  \label{complex null cone}
  {\mathcal{Q}}_{2}= \left\{  \left( z_0 , z_1, z_2, z_3 \right) \in {\mathbb{C}}^{4} \; \vert \; {z_0}^{2} + {z_1}^{2} +{z_2}^{2} +{z_3}^{2} = 0 \right\}.
\end{equation}
Such immersion ${\mathbf{X}}(\zeta)$ can be recovered, up to translations, by the integration  
\begin{equation}  \label{from complex curve}
  {\mathbf{X}} = \left(  \textrm{Re} \int {\omega}_{0}, \,  \textrm{Re} \int {\omega}_{1}, \, \textrm{Re} \int {\omega}_{2}, \, \textrm{Re} \int {\omega}_{3}\right), 
\end{equation}
where we introduce  holomorphic one forms $\left( {\omega}_{0}, {\omega}_{1},  {\omega}_{2}, {\omega}_{3} \right) = \left( {\phi}_{0} \, d\zeta, {\phi}_{1} \, d\zeta,  {\phi}_{2} \, d\zeta, {\phi}_{3} d\zeta \right)$. The induced metric on the surface $\Sigma$ reads 
$g_{\Sigma}=\frac{1}{2} \left( {\vert  {\phi}_{0}  \vert}^{2} +  {\vert  {\phi}_{1}  \vert}^{2} + {\vert  {\phi}_{2}  \vert}^{2} + {\vert  {\phi}_{3}  \vert}^{2} \right)
 \, {\vert d \zeta \vert}^{2}$.

We introduce the Gauss map of minimal surfaces in ${\mathbb{R}}^{4}$. Inside the complex projective space  ${\mathbb{C}}{\mathbb{P}}^{3}$, we take the complex null cone 
\[
  {\mathcal{Q}}_{2}:=\{ z=[{z}_{0}: {z}_{1}: {z}_{2}: {z}_{3} ] \in  {\mathbb{C}}{\mathbb{P}}^{3}  \; \vert \;   
  {{z}_{0}}^{2} + {{z}_{1}}^{2}  + {{z}_{2}}^{2}  +  {{z}_{3}}^{2} =0 \}.
\]

\begin{definition}[\textbf{Gauss map of minimal surfaces in ${\mathbb{R}}^{4}$ \cite[Chapter 3, p.35]{HO80}}]
Let $\Sigma$ be a minimal surface in ${\mathbb{R}}^{4}$. Consider a conformal harmonic immersion $  {\mathbf{X}} :\Sigma \rightarrow {\mathbb{R}}^{4}$ 
with the local coordinate $\zeta$. The Gauss map of $\Sigma$ is the map
$\mathcal{G}:\Sigma \rightarrow {\mathcal{Q}}_{2} \subset {\mathbb{C}}{\mathbb{P}}^{3}$ defined by
\[
   \mathcal{G}(\zeta):=\left[ \;   { \frac{\partial   {\mathbf{X}} }{\partial {\zeta}} } \; \right] 
   = [\; {\phi}_{0}:  {\phi}_{1}:  {\phi}_{2}:  {\phi}_{3} \; ] \in  {\mathbb{C}}{\mathbb{P}}^{3}.
\]
\end{definition}
 
 Following \cite[Chapter 4]{HO80}, we shall review the notion of degeneracy of Gauss map of minimal surfaces in ${\mathbb{R}}^{4}$. Let $\Sigma$ be a minimal surface,  which \textit{lies fully} in ${\mathbb{R}}^{4}$, in the sense that it does not lie in any proper affine subpace of ${\mathbb{R}}^{4}$.
 The surface $\Sigma$ is \textit{degenerate} when the image of its Gauss map lies in a hyperplane of ${\mathbb{CP}}^{3}$. Otherwise, 
it is  \textit{non-degenerate}. A degenerate minimal surface is  $k$-\textit{degenerate}, if $k \in \{1, 2, 3 \}$ is the largest  integer such that 
the Gauss map image lies in a projective subspace  of codimension $k$ in ${\mathbb{CP}}^{3}$. We recall the fundamental theorem \cite[Proposition 4.6]{HO80} that the surface $\Sigma$ is $2$-degenerate if and only if there exists an orthogonal complex structure on ${\mathbb{R}}^{4}$
such that it becomes a complex analytic curve lying fully in ${\mathbb{C}}^{2}$. 
 
Bernstein's beautiful theorem says that the only entire minimal graphs in ${\mathbb{R}}^{3}$ are planes.  
More generally, Osserman solved the codimension two generalization of Bernstein type problem,  and showed that 
examples of $1$-degenrate and $2$-generate minimal surfaces  in ${\mathbb{R}}^{4}$ naturally appear in the 
classification of entire minimal minimal graphs.
 
\begin{example}[\textbf{Osserman's entire, non-planar, minimal graph in ${\mathbb{R}}^{4}$ \cite[Chapter 5]{Oss86}}] 
We prepare a complex constant $\mu = a- ib$ with $a \in \mathbb{R}$ and $b>0$. For any entire holomorphic 
function $\mathbf{F}: \mathbb{C} \to \mathbb{C}$, we define the minimal surface ${\Sigma}$ with the patch
    \begin{center}
$ {\mathbf{X}} \left( \zeta \right)   
 = \left( \textrm{Re} \int   {\widehat{\,\phi\,}}_{0}\left( \zeta \right) d\zeta, \,    \textrm{Re} \int   {\widehat{\,\phi\,}}_{1}\left( \zeta \right) d\zeta, \, \textrm{Re} \int   {\widehat{\,\phi\,}}_{2}\left( \zeta \right) d\zeta, \, \textrm{Re} \int   {\widehat{\,\phi\,}}_{3}\left( \zeta \right) d\zeta \right)$,
  \end{center}
  where the holomorphic curve  $\left( {\widehat{\,\phi\,}}_{0} , \, {\widehat{\,\phi\,}}_{1} = \mu {\widehat{\,\phi\,}}_{0}, \,  {\widehat{\,\phi\,}}_{2}, {\widehat{\,\phi\,}}_{3}\right)$ reads
    \begin{center}
 $\left( 1, \,  {\mu}, \,  \frac{1}{2} \left(  e^{\mathbf{F}\left( \zeta \right)}  - \left( 1+ {\mu}^{2}  \right)  e^{-\mathbf{F}\left( \zeta \right)}     \right) , \,  
 \frac{i}{2} \left(  e^{\mathbf{F}\left( \zeta \right)}  +  \left( 1+ {\mu}^{2}  \right)  e^{-\mathbf{F}\left( \zeta \right)}     \right)  \right)$.
  \end{center}
 One can check that $\Sigma$ becomes the entire graph $\left(x_{1}, x_{2}, \mathbf{A}(x_{1}, x_{2}), \mathbf{B}(x_{1},x_{2}) \right)$ defined on the whole 
 $x_{1} x_{2}$-plane. 
 Osserman proved that any entire, non-planar, minimal graphs in ${\mathbb{R}}^{4}$ should admit the above representation with the entire holomorphic function $\mathbf{F}$. When $\mu \in \{i, -i \}$, the minimal surface $\Sigma$ becomes the complex analytic curve. 
\end{example}

\begin{remark}
As known in \cite[Theorem 4.7]{HO80}, degenerate minimal surfaces  in ${\mathbb{R}}^{4}$ admit a general representation formula, which is analogous to the classical Enneper-Weierstrass representation formula for minimal surfaces  in ${\mathbb{R}}^{3}$.
\end{remark}

 \section{Complex parabolic rotations of holomorphic null curves in ${\mathbb{C}}^{4}$}
 
Given a holomorphic null curve $\phi$ in  ${\mathbb{C}}^{4}$, for any linear mapping $\mathcal{M} \in \mathbf{O}\left(4, \mathbb{C} \right)$, we can associate new holomorphic null curve  $\widetilde{\phi} = M \phi$. The purpose of this section is to construct the so-called 
 parabolic rotations of holomorphic null curves in  ${\mathbb{C}}^{4}$ to construct explicit deformations of minimal surfaces 
 in ${\mathbb{R}}^{3}$ to degenerate minimal surfaces in ${\mathbb{R}}^{4}$.

\begin{lemma}[\textbf{Complex parabolic rotations of holomorphic null curves in ${\mathbb{C}}^{4}$}] \label{deform 1} 
Given a non-constant holomorphic curve $\phi \left(\zeta\right): \Sigma \to {\mathbb{C}}^{4}$ and a constant $c \in \mathbb{C}$, we associate the holomorphic curve $\widehat{\phi}: \Sigma \to {\mathbb{C}}^{4}$ by the linear transformation 
\begin{equation}  \label{parabolic linear transformation on complex null cone}
 \begin{bmatrix} 
 {\phi}_{0} \\ {\phi}_{1} \\  {\phi}_{2} \\ {\phi}_{3}
 \end{bmatrix} 
 \mapsto 
  \begin{bmatrix} 
 {\widehat{\phi}}_{0} \\  {\widehat{\phi}}_{1} \\   {\widehat{\phi}}_{2} \\  {\widehat{\phi}}_{3}
 \end{bmatrix} 
 =
 \begin{bmatrix} 
 1 &  - c  & -c \, i & 0 \\ 
  c  & 1-\frac{c^2}{2}  & - \frac{c^2}{2} i & 0 \\ 
 c \, i  & - \frac{c^2}{2} i & 1+ \frac{c^2}{2} & 0 \\ 
 0 & 0 & 0 & 1 
 \end{bmatrix} 
  \begin{bmatrix} 
 {\phi}_{0} \\ {\phi}_{1} \\  {\phi}_{2} \\ {\phi}_{3}
 \end{bmatrix} 
\end{equation} 
 
If the curve $\phi$ lies on the null cone  ${\mathcal{Q}}_{2} = \left\{  \left( z_0 , z_1, z_2, z_3 \right) \in {\mathbb{C}}^{4} \; \vert \; {z_0}^{2} + {z_1}^{2} +{z_2}^{2} +{z_3}^{2} = 0 \right\}$, then the curve $\widehat{\phi}$ also lies on ${\mathcal{Q}}_{2}$. 
\end{lemma}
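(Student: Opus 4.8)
Write $\mathcal{M}$ for the $4\times 4$ matrix appearing in the statement. The plan is to show that $\mathcal{M}$ preserves the complex-bilinear quadratic form $Q(z) = z_0^2 + z_1^2 + z_2^2 + z_3^2 = z^T z$ whose zero locus is $\mathcal{Q}_2$. It is essential here that $Q$ is the symmetric form built with the transpose and \emph{no} complex conjugation, so that $Q$ is a holomorphic polynomial; this is exactly the form whose symmetry group is $\mathbf{O}\left(4,\mathbb{C}\right)$ rather than the unitary group. Once I verify that $\mathcal{M}$ is complex orthogonal, $\mathcal{M}^T \mathcal{M} = I_4$, then for every curve $\phi$,
\[
Q(\widehat{\phi}) = (\mathcal{M}\phi)^T (\mathcal{M}\phi) = \phi^T \mathcal{M}^T \mathcal{M}\, \phi = \phi^T \phi = Q(\phi),
\]
so $Q(\phi) = 0$ forces $Q(\widehat{\phi}) = 0$, which is precisely the assertion that $\widehat{\phi}$ again lies on $\mathcal{Q}_2$. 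Thus the whole lemma reduces to checking that $\mathcal{M}$ is complex orthogonal.

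Next I would exploit the block structure. The fourth row and column of $\mathcal{M}$ coincide with those of the identity and the $\phi_3$-coordinate is left untouched, so it suffices to verify $A^T A = I_3$ for the upper-left $3\times 3$ block $A$. I would compute $A^T A$ by pairing the columns of $A$ against one another in the bilinear form. For each diagonal entry the $c^2$- and $c^4$-contributions cancel through $i^2=-1$ (for example the first diagonal entry is $1 + c^2 + (ci)^2 = 1$), leaving $1$; for the three off-diagonal entries the surviving terms cancel in pairs and give $0$. This step is entirely mechanical and is the only bookkeeping in the argument: there is no conceptual obstacle, merely the need to keep the factors of $i$ straight.

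Finally I would record the structural reason behind the precise entries, which both explains the name \emph{parabolic rotation} and lets one bypass the brute-force check altogether. The block $A$ is the exponential $A = \exp(cN)$ of
\[
N = \begin{bmatrix} 0 & -1 & -i \\ 1 & 0 & 0 \\ i & 0 & 0 \end{bmatrix},
\]
which is skew-symmetric, $N^T = -N$, hence lies in $\mathfrak{so}(3,\mathbb{C})$, and is nilpotent with $N^3 = 0$. Because the exponential series terminates, $\exp(cN) = I_3 + cN + \tfrac{c^2}{2}N^2$ reproduces $A$ exactly, and the exponential of a skew-symmetric matrix is automatically complex orthogonal; hence $A \in \mathbf{SO}(3,\mathbb{C})$ and $\mathcal{M}\in\mathbf{O}\left(4,\mathbb{C}\right)$, with no need to expand $A^T A$ by hand. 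The nilpotency $N^3=0$, rather than diagonalizability, is what makes $\{\exp(cN)\}$ a unipotent, i.e. parabolic, one-parameter family, matching the terminology of the lemma. I would present the direct verification as the proof and append this exponential description as the conceptual remark.
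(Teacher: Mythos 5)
Your proposal is correct, and its computational core is the same as the paper's: the paper's entire proof consists of checking the algebraic identity \eqref{parabolic rotation}, namely ${\widehat{\phi}_0}^{\,2}+{\widehat{\phi}_1}^{\,2}+{\widehat{\phi}_2}^{\,2}={\phi_0}^2+{\phi_1}^2+{\phi_2}^2$, and then noting $\widehat{\phi}_3=\phi_3$; your verification that the upper-left $3\times 3$ block $A$ satisfies $A^T A=I_3$ (with the fourth coordinate untouched, so $\mathcal{M}^T\mathcal{M}=I_4$ and $\mathcal{M}\in\mathbf{O}\left(4,\mathbb{C}\right)$) is exactly that identity, organized as preservation of the holomorphic bilinear form on all of $\mathbb{C}^4$ rather than checked only along the curve. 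Your column pairings are right: for instance $1+c^2+(ci)^2=1$, and the remaining diagonal and off-diagonal pairings cancel as you say. What you add beyond the paper's proof is genuinely worthwhile: the closed form $A=\exp(cN)$ with $N^T=-N$ and $N^3=0$ checks out (indeed $I_3+cN+\tfrac{c^2}{2}N^2$ reproduces $A$ entry by entry), it makes orthogonality automatic from $\exp(cN)^T=\exp(-cN)$, and it justifies the word \emph{parabolic} via unipotency. The paper arrives at the same structural understanding by two different routes, both deliberately placed in remarks rather than in the proof: Remark \ref{wick} produces the matrix by Wick-rotating the genuine parabolic isometries \eqref{parabolic rotation 2} of the light cone in ${\mathbb{L}}^{3}$, and the subsequent remark produces it from the unipotent triangular factors with $\left(L,R\right)=\left(-ci,-ci\right)$ acting on the Segre determinant model of the null cone. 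Your nilpotent $N$ is precisely the infinitesimal generator of that unipotent one-parameter subgroup, so all three viewpoints agree; yours has the merit of eliminating the bookkeeping entirely, while the paper's Wick-rotation remark better explains where the matrix comes from geometrically.
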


\begin{proof} It is straightforward to check the algebraic identity
\begin{equation}   \label{parabolic rotation}
  { {\widehat{\phi}}_{0} \, }^{2} +   { {\widehat{\phi}}_{1} \, }^{2} +  { {\widehat{\phi}}_{2} \, }^{2}  =  { {\phi}_{0} }^{2} +  { {\phi}_{1} }^{2} + { {\phi}_{2} }^{2}.  \end{equation}
Since  ${\widehat{\phi}}_{3} =  {\phi}_{3}$, this implies 
\[
{ {\widehat{\phi}}_{0} \, }^{2} +   { {\widehat{\phi}}_{1} \, }^{2} +  { {\widehat{\phi}}_{2} \, }^{2} +  { {\widehat{\phi}}_{3} \, }^{2}=  { {\phi}_{0} }^{2} +  { {\phi}_{1} }^{2} + { {\phi}_{2} }^{2} + { {\phi}_{3} }^{2}.
\] 
Since the curve $\phi$ lies on the null cone ${\mathcal{Q}}_{2}$, the curve $\widehat{\phi}$ also lies on ${\mathcal{Q}}_{2}$. 
\end{proof}

The identity (\ref{parabolic rotation}) in the proof of Lemma \ref{deform 1} is the key idea of our deformation (\ref{parabolic linear transformation on complex null cone}) in Lemma \ref{deform 1}. We shall illustrate ideas behind the identity (\ref{parabolic rotation}) and Lemma \ref{deform 1}. 
 
\begin{remark}[\textbf{Real light cone $x^2 +y^2 -z^2=0$ in ${\mathbb{L}}^{3}$, complex cone ${z_{0}}^{2} +{z_{1}}^{2}+{z_{2}}^{2}=0$ in 
${\mathbb{C}}^{3}$,  parabolic rotations, and Wick rotation}]  \label{wick} We explain that the algebraic identity (\ref{parabolic rotation}) in the proof of Lemma \ref{deform 1} can be obtained by the complexification of parabolic rotational isometries  in ${\mathbb{L}}^{3}$ via Wick rotation. Let  ${\mathbb{L}}^{3}$ denote the Lorentz-Minkowski $(2+1)$-space, which is the real vector space ${\mathbb{R}}^{3}$ endowed with the Lorentzian metric $dx^2 +dy^2 -dz^2$. The light cone  sitting in  ${\mathbb{L}}^{3}$ given by the quadratic variety 
\[
  \left\{  \left( x, y, z \right) \in {\mathbb{R}}^{3}  \; \vert \; x^2 +y^2 -z^2=0 \right\}
\]
is invariant under parabolic rotations $ {\mathcal{L}}_{t \in \mathbb{R}}$ with respect to the null line spanned by light-like vector $(1, 0, 1)$. The 
isometry ${\mathcal{L}}_{t}$ is given by  (for instance, see \cite[Section 2]{MP03})
\begin{equation}  \label{parabolic rotation 2}
 \begin{bmatrix} 
x  \\  y \\ z
 \end{bmatrix} 
 \mapsto 
  \begin{bmatrix} 
 \widehat{x}   \\   \widehat{y} \\  \widehat{z}
 \end{bmatrix} 
 = {\mathcal{L}}_{t} \begin{pmatrix} 
 \widehat{x}   \\   \widehat{y} \\  \widehat{z}
 \end{pmatrix} 
 =
 \begin{bmatrix} 
1-\frac{t^2}{2}   &  t &  \frac{t^2}{2}  \\ 
  -t  & 1   & t \\ 
- \frac{t^2}{2} & t &  1+\frac{t^2}{2} 
 \end{bmatrix} 
  \begin{bmatrix} 
x  \\  y \\ z
 \end{bmatrix},
\end{equation}
for each $t \in \mathbb{R}$. We have the real quadratic form identity
\begin{equation}  \label{minus cone}
x^2 +y^2 -z^2 =  {\widehat{x}\,}^2 + {\widehat{y}\,}^2 -{\widehat{z}\,}^2, \; \text{where} \begin{bmatrix} 
 \widehat{x}   \\   \widehat{y} \\  \widehat{z}
 \end{bmatrix} 
 = {\mathcal{L}}_{t} \begin{pmatrix} 
 \widehat{x}   \\   \widehat{y} \\  \widehat{z}
 \end{pmatrix}. 
\end{equation}
So far, we viewed $x$, $y$, $z$,  $\widehat{x}$,  $\widehat{y}$, $\widehat{z}$ as real variables. 
However, clearly, the algebraic identity (\ref{minus cone}) also holds when we treat them as complex variables. We perform
the so-called Wick rotation. Replace 
$z$ by $-i z$, and $\widehat{z}$ by $- i \widehat{z}$ in (\ref{parabolic rotation}) to have the transformation
\[
 \begin{bmatrix} 
x  \\  y \\ -i z
 \end{bmatrix} 
 \mapsto 
  \begin{bmatrix} 
 \widehat{x}   \\   \widehat{y} \\  -i \widehat{z}
 \end{bmatrix} 
 =
 \begin{bmatrix} 
1-\frac{t^2}{2}   &  t &  \frac{t^2}{2}  \\ 
  -t  & 1   & t \\ 
- \frac{t^2}{2} & t &  1+\frac{t^2}{2} 
 \end{bmatrix} 
  \begin{bmatrix} 
x  \\  y \\ -i z
 \end{bmatrix},
\]
which can be rewritten as 
\[
 \begin{bmatrix} 
x  \\  y \\   z
 \end{bmatrix} 
 \mapsto 
  \begin{bmatrix} 
 \widehat{x}   \\   \widehat{y} \\    \widehat{z}
 \end{bmatrix} 
 = {\mathcal{R}}_{t} \begin{pmatrix} 
x  \\ y \\  z
 \end{pmatrix} 
 =
 \begin{bmatrix} 
1-\frac{t^2}{2} &  t  & - \frac{t^2}{2} i  \\ 
 - t  &  1 & - t i   \\ 
- \frac{t^2}{2} i & t i & 1+ \frac{t^2}{2} 
  \end{bmatrix} 
  \begin{bmatrix} 
x  \\  y \\  z
 \end{bmatrix},
\]
Now, the real variables identity (\ref{minus cone}) induces the complex quadratic form identity 
\begin{equation}  \label{plus cone}
x^2 +y^2 +z^2 =  {\widehat{x}\,}^2 + {\widehat{y}\,}^2 +{\widehat{z}\,}^2,  \; \text{where} \begin{bmatrix} 
 \widehat{x}   \\   \widehat{y} \\  \widehat{z}
 \end{bmatrix} 
 = {\mathcal{R}}_{t} \begin{pmatrix} 
 \widehat{x}   \\   \widehat{y} \\  \widehat{z}
 \end{pmatrix}. 
\end{equation}
Finally, taking $(x, y, z)=\left(z_1, z_0, z_2 \right)$, $(\widehat{x}, \widehat{y}, \widehat{z})=\left({\widehat{z}}_1, {\widehat{z}}_0, {\widehat{z}}_2 \right)$, and $t=c$, we have
\begin{equation} 
{z_0}^2 +{z_1}^2 +{z_2}^2 =  { {\widehat{z}}_{0}\,}^2 + { {\widehat{z}}_{1}\,}^2 +{ {\widehat{z}}_{2}\,}^2,  \; \text{where} \;
 \begin{bmatrix} 
 {\widehat{z}}_{0} \\  {\widehat{z}}_{1} \\   {\widehat{z}}_{2}  
  \end{bmatrix} 
 =
 \begin{bmatrix} 
 1 &  - c  & -c \, i   \\ 
  c  & 1-\frac{c^2}{2}  & - \frac{c^2}{2} i   \\ 
 c \, i  & - \frac{c^2}{2} i & 1+ \frac{c^2}{2}  
  \end{bmatrix} 
  \begin{bmatrix} 
 {z}_{0} \\ {z}_{1} \\  {z}_{2}  
 \end{bmatrix}, 
\end{equation} 
which gives the algebraic identity (\ref{parabolic rotation}). We see that $\left(z_1, z_0, z_2 \right) \mapsto \left({\widehat{z}}_1, {\widehat{z}}_0, {\widehat{z}}_2 \right)$ becomes the well-defined linear transformation from the complex null cone to itself. See also \cite[Section 3. Complex rotations and minimal surfaces]{MB2000}.
\end{remark}

\begin{remark}[\textbf{Parabolic rotations of null curves in ${\mathbb{C}}^{4}$ in terms of Segre coordinates}]
We begin with the algebraic identity 
\begin{equation} \label{segre product}
{z_0}^{2} + {z_1}^{2} +{z_2}^{2} +{z_3}^{2} =  \left( z_{1} + i z_{2} \right) \left(  z_{1} - i z_{2} \right) - \left( z_{3} + i z_{0} \right) \left( -z_{3} + i z_{0} \right).
\end{equation}  
Using the Segre transformation $\left( t_{1},  t_{2}, t_{3}, t_{0} \right) = \left(  z_{1} + i z_{2},  z_{1} - i z_{2}, z_{3} + i z_{0},  -z_{3} + i z_{0}\right)$, we are able to identify the  null cone ${z_0}^{2} + {z_1}^{2} +{z_2}^{2} +{z_3}^{2} =0$ as the determinant variety  
\begin{equation}  \label{complex null cone}
\mathcal{S} = \left\{ \, \begin{bmatrix} 
 t_{1} & t_{0}    \\  
 t_{3} & t_{2} 
 \end{bmatrix}  \in M_{2} \left( \mathbb{C} \right) \; \vert \; \det  \begin{bmatrix} 
 t_{1} & t_{0}    \\  
 t_{3} & t_{2} 
 \end{bmatrix}  = 0 \, \right\}.
\end{equation}
Given a pair $\left(L, R\right)$ of complex constants, we find the implication 
\begin{equation}  \label{generalized deformation in serge}
\begin{bmatrix} 
 t_{1} & t_{0}    \\  
 t_{3} & t_{2} 
 \end{bmatrix} \in \mathcal{S}
\; \Rightarrow \; 
 \begin{bmatrix} 
 \, {\widehat{t}}_{1} &  {\widehat{t}}_{0}   \\  
 \, {\widehat{t}}_{3} & {\widehat{t}}_{2}
 \end{bmatrix} 
 := \begin{bmatrix} 
 1 & 0    \\  
  L &1
 \end{bmatrix} 
 \begin{bmatrix} 
 t_{1} & t_{0}    \\  
 t_{3} & t_{2} 
 \end{bmatrix} 
 \begin{bmatrix} 
1 & R   \\  
0 & 1 
 \end{bmatrix} 
 \in \mathcal{S}.
 \end{equation}
Writing $\left(  {\widehat{t}}_{1},   {\widehat{t}}_{2},  {\widehat{t}}_{3},  {\widehat{t}}_{0} \right) = 
\left(   {\widehat{z}}_{1} + i  {\widehat{z}}_{2},   {\widehat{z}}_{1} - i  {\widehat{z}}_{2}, {\widehat{z}}_{3}
 + i  {\widehat{z}}_{0},  - {\widehat{z}}_{3} + i  {\widehat{z}}_{0}\right)$, we see that this implication induces 
 the linear map from the quadratic cone ${z_0}^{2} + {z_1}^{2} +{z_2}^{2} +{z_3}^{2} =0$ to itself:
 \begin{equation}  \label{LR}
 \begin{bmatrix} 
 {z}_{0} \\ {z}_{1} \\  {z}_{2} \\ {z}_{3}
 \end{bmatrix} 
 \mapsto 
  \begin{bmatrix} 
 {\widehat{z}}_{0} \\  {\widehat{z}}_{1} \\   {\widehat{z}}_{2} \\  {\widehat{z}}_{3}
 \end{bmatrix} 
 =
 \begin{bmatrix} 
 1 &  - \frac{i}{2} \left( L+R \right)  & \frac{1}{2}  \left( L+R \right) & 0 \\ 
\frac{i}{2} \left( L+R \right) & 1 + \frac{LR}{2}  & \frac{i}{2} LR & - \frac{1}{2}   \left( L -R \right)  \\ 
-  \frac{1}{2}  \left( L+R \right)   &   \frac{i}{2} LR & 1 -  \frac{LR}{2} &  - \frac{i}{2}   \left( L -R \right) \\ 
 0 & \frac{1}{2}   \left( L -R \right) &  \frac{i}{2}   \left( L -R \right) & 1 
 \end{bmatrix} 
 \begin{bmatrix} 
 {z}_{0} \\ {z}_{1} \\  {z}_{2} \\ {z}_{3}
 \end{bmatrix} 
\end{equation} 
In particular, taking $\left(L, R\right)=\left(- c i, -ci\right)$, we obtain the linear transformation 
  \begin{equation}  \label{LRc}
 \begin{bmatrix} 
 {z}_{0} \\ {z}_{1} \\  {z}_{2} \\ {z}_{3}
 \end{bmatrix} 
 \mapsto 
  \begin{bmatrix} 
 {\widehat{z}}_{0} \\  {\widehat{z}}_{1} \\   {\widehat{z}}_{2} \\  {\widehat{z}}_{3}
 \end{bmatrix} 
 =
 \begin{bmatrix} 
  1 &  - c  & -c \, i & 0 \\ 
  c  & 1-\frac{c^2}{2}  & - \frac{c^2}{2} i & 0 \\ 
 c \, i  & - \frac{c^2}{2} i & 1+ \frac{c^2}{2} & 0 \\ 
 0 & 0 & 0 & 1 
  \end{bmatrix} 
 \begin{bmatrix} 
 {z}_{0} \\ {z}_{1} \\  {z}_{2} \\ {z}_{3}
 \end{bmatrix}, 
\end{equation} 
 which recovers the deformation (\ref{parabolic linear transformation on complex null cone}) in Lemma \ref{deform 1}. 
\end{remark}
  
 \section{From minimal surfaces in ${\mathbb{R}}^{3}$ to degenerate minimal surfaces in ${\mathbb{R}}^{4}$}
    
\begin{theorem}[\textbf{Deformations of non-planar minimal surfaces in ${\mathbb{R}}^{3}$ to degenerate minimal surfaces in ${\mathbb{R}}^{4}$}] \label{thm:main}
 Let $\Sigma$ be a simply connected non-planar minimal surface in ${\mathbb{R}}^{3}$, up to translations, parametrized by the 
 conformal harmonic immersion
  \begin{equation}
  {\mathbf{X}}\left( \zeta \right)   
 = \left(    \textrm{Re} \int  {\phi}_{1}\left( \zeta \right)    d\zeta, \, \textrm{Re} \int  {\phi}_{2}\left( \zeta \right)    d\zeta, \, \textrm{Re} \int  {\phi}_{3}\left( \zeta \right)    d\zeta \right), 
  \end{equation}
  where the holomorphic null curve $  {\phi} = \left(   {\phi}_{1}, \,   {\phi}_{2}, \,   {\phi}_{3}\right)$ admits the Weierstrass data $(G(\zeta), \Psi(\zeta) d\zeta):$ 
      \begin{equation} \label{R3 w data}
 {\phi} = 
   \left(  \frac{1}{2}  \left(1 -G^2 \right)  \Psi, \,  \frac{i}{2} \left(1 +G^2 \right)  \Psi, \,   G  \Psi  \right),
  \end{equation}
  Given a constant $c \in \mathbb{C}$, there exists a minimal surface ${\Sigma}^{c}$ in ${\mathbb{R}}^{4}$, up to translations, parametrized by the conformal harmonic immersion
    \begin{equation}
  {\mathbf{X}}^{c} \left( \zeta \right)   
 = \left( \textrm{Re} \int   {\widehat{\,\phi\,}}_{0}\left( \zeta \right) d\zeta, \,    \textrm{Re} \int   {\widehat{\,\phi\,}}_{1}\left( \zeta \right) d\zeta, \, \textrm{Re} \int   {\widehat{\,\phi\,}}_{2}\left( \zeta \right) d\zeta, \, \textrm{Re} \int   {\widehat{\,\phi\,}}_{3}\left( \zeta \right) d\zeta \right),
  \end{equation}
  where the holomorphic curve  $    {\widehat{\,\phi \,}} = \left( {\widehat{\,\phi\,}}_{0}, \, {\widehat{\,\phi\,}}_{1}, \,  {\widehat{\,\phi\,}}_{2}, {\widehat{\,\phi\,}}_{3}\right)$ is 
  determined by 
  \begin{equation}  \label{deformation w data}
   {\widehat{\,\phi \,}}
= \left(  c \, G^2  \Psi, \,  \frac{1}{2}  \left(1 + \left(c^2 -1\right)G^2 \right)  \Psi  , \,  \frac{i}{2} \left(1 +\left(c^2 +1\right)G^2 \right)  \Psi, \,   G  \Psi    \right)
 \end{equation}
 The induced metric on ${\Sigma}^{c}$ by the patch ${\mathbf{X}}^{c}$ reads 
  \[
  g_{{}_{{\Sigma}^{c}}}=
  \frac{1}{4}\left( \,  {\vert \Psi \vert}^2 \, {\vert   1 + c^2 G^2   \vert}^{2}  \left( 1 + \frac{  {\vert G \vert}^{2} }{ { \vert 1 +i c G \vert  }^{2}} \right)  \left( 1 + \frac{  {\vert G \vert}^{2} }{ { \vert 1 -i c G \vert  }^{2}} \right)   \, \right)  \,  {\vert d\zeta \vert}^{2}.
  \]
  Due to the identity $ {\widehat{\,\phi\,}}_{0} + c  {\widehat{\,\phi\,}}_{1} +  i c   {\widehat{\,\phi\,}}_{2}=0$, the minimal 
  surface ${\Sigma}^{c}$ in ${\mathbb{R}}^{4}$ is degenerate. 
\end{theorem}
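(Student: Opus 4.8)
The plan is to recognize the deformed curve $\widehat{\phi}$ as nothing more than the image of the given $\mathbb{R}^3$ null curve under the complex parabolic rotation of Lemma~\ref{deform 1}, and then to extract every assertion of the theorem from this single identification. First I would embed $\mathbb{R}^3$ into $\mathbb{R}^4$ as the hyperplane $\{x_0=0\}$, so that the holomorphic null curve $\phi=(\phi_1,\phi_2,\phi_3)$ of $\Sigma$ becomes the curve $(0,\phi_1,\phi_2,\phi_3)$ in $\mathbb{C}^4$; since $\phi_1^2+\phi_2^2+\phi_3^2=0$, this lands on the null cone $\mathcal{Q}_2$. Feeding $(0,\phi_1,\phi_2,\phi_3)$ into the transformation~(\ref{parabolic linear transformation on complex null cone}) with parameter $c$, Lemma~\ref{deform 1} guarantees that the output $\widehat{\phi}$ again lies on $\mathcal{Q}_2$, and because the matrix has constant entries while $\phi$ is holomorphic, $\widehat{\phi}$ is a holomorphic null curve. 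Its real period integral $\mathbf{X}^c$ is then automatically conformal and harmonic; regularity of the immersion follows because the parabolic matrix preserves a nondegenerate quadratic form and is therefore invertible, so $\widehat{\phi}$ vanishes only where $\phi$ does.

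To pin down the explicit components~(\ref{deformation w data}), I would substitute the Weierstrass data~(\ref{R3 w data}). The computation is driven by the single collapse $\phi_1+i\phi_2=-G^2\Psi$, which instantly gives $\widehat{\phi}_0=-c(\phi_1+i\phi_2)=cG^2\Psi$; the components $\widehat{\phi}_1$ and $\widehat{\phi}_2$ come from expanding $(1-\tfrac{c^2}{2})\phi_1-\tfrac{c^2}{2}i\,\phi_2$ and $-\tfrac{c^2}{2}i\,\phi_1+(1+\tfrac{c^2}{2})\phi_2$ and regrouping the coefficient of $G^2$, while $\widehat{\phi}_3=\phi_3=G\Psi$ is untouched.

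For the induced metric I would start from the general Weierstrass formula $g_{\Sigma^c}=\tfrac{1}{2}\bigl(|\widehat{\phi}_0|^2+|\widehat{\phi}_1|^2+|\widehat{\phi}_2|^2+|\widehat{\phi}_3|^2\bigr)|d\zeta|^2$ recalled earlier. The organizing trick is the factorization $1+c^2G^2=(1+icG)(1-icG)$, which rewrites the claimed metric, after clearing the denominators $|1\pm icG|^2$, as $\tfrac{1}{4}|\Psi|^2\bigl(|1+icG|^2+|G|^2\bigr)\bigl(|1-icG|^2+|G|^2\bigr)$. On the other side, writing $p=1+c^2G^2$ and $w=G^2$ one has $\widehat{\phi}_1=\tfrac{1}{2}(p-w)\Psi$ and $\widehat{\phi}_2=\tfrac{i}{2}(p+w)\Psi$, so the parallelogram identity $|p-w|^2+|p+w|^2=2|p|^2+2|w|^2$ yields $|\widehat{\phi}_1|^2+|\widehat{\phi}_2|^2=\tfrac{1}{2}|\Psi|^2(|1+c^2G^2|^2+|G|^4)$. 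Adding $|\widehat{\phi}_0|^2=|c|^2|G|^4|\Psi|^2$ and $|\widehat{\phi}_3|^2=|G|^2|\Psi|^2$ and comparing with the expanded product, using $|1+icG|^2+|1-icG|^2=2+2|c|^2|G|^2$ and $|1+icG|^2|1-icG|^2=|1+c^2G^2|^2$, completes the match. This reconciliation of a sum of four squared moduli with a product of two binomials is the one genuinely laborious step, and I expect it to be the main obstacle.

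Finally, the degeneracy is immediate once the components are in hand: direct substitution gives $\widehat{\phi}_0+c\widehat{\phi}_1+ic\widehat{\phi}_2=cG^2\Psi+\tfrac{c}{2}(1+(c^2-1)G^2)\Psi-\tfrac{c}{2}(1+(c^2+1)G^2)\Psi=0$, so the Gauss map image $[\widehat{\phi}_0:\widehat{\phi}_1:\widehat{\phi}_2:\widehat{\phi}_3]$ is confined to the fixed hyperplane $\{z_0+cz_1+icz_2=0\}\subset\mathbb{CP}^3$, which is exactly the definition of degeneracy. The only remaining subtlety is that the term \emph{degenerate} presupposes $\Sigma^c$ lies fully in $\mathbb{R}^4$; I would confirm this for $c\neq 0$ from the non-planarity of $\Sigma$ together with the explicit form of $\widehat{\phi}$.
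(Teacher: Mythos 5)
Your proposal is correct and follows essentially the same route as the paper: set ${\phi}_{0}=0$ to view $\Sigma \subset {\mathbb{R}}^{3}$ as a null curve in ${\mathbb{C}}^{4}$, apply the parabolic rotation of Lemma \ref{deform 1} to obtain nullity of $\widehat{\phi}$, read off the components from the Weierstrass data, verify the conformal factor, and check the linear identity ${\widehat{\,\phi\,}}_{0} + c\,{\widehat{\,\phi\,}}_{1} + ic\,{\widehat{\,\phi\,}}_{2}=0$ for degeneracy. Your parallelogram-identity verification of the metric and your remarks on regularity and on fullness for $c \neq 0$ are correct details that the paper states without proof.
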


\begin{proof} 
To prove that ${\Sigma}^{c}$ is a minimal surface in ${\mathbb{R}}^{4}$, we show that the holomorphic curve 
\[
 \left(  c \, G^2  \Psi, \,  \frac{1}{2}  \left(1 + \left(c^2 -1\right)G^2 \right)  \Psi  , \,  \frac{i}{2} \left(1 +\left(c^2 +1\right)G^2 \right)  \Psi, \,   G  \Psi    \right)
 \]
 is null. Take ${\phi}_{0} =0$.  Regard  ${\Sigma}$ in ${\mathbb{R}}^{3}$ 
induced by the holomorphic null curve $\left({\phi}_{1}, {\phi}_{2}, {\phi}_{3} \right)$ in ${\mathbb{C}}^{3}$ as a minimal surface in ${\mathbb{R}}^{4}$ induced by the holomorphic null curve in ${\mathbb{C}}^{4}:$  
\[
\left({\phi}_{0} , {\phi}_{1}, {\phi}_{2}, {\phi}_{3} \right) = \left(0 , \frac{1}{2}  \left(1 -G^2 \right)  \Psi, \,  \frac{i}{2} \left(1 +G^2 \right)  \Psi, \, G  \Psi \right).
\]
Then, by Lemma \ref{deform 1}, we find that the holomorphic curve  in ${\mathbb{C}}^{4}:$ 
\[
 \begin{bmatrix} 
 1 &  - c  & -c \, i & 0 \\ 
  c  & 1-\frac{c^2}{2}  & - \frac{c^2}{2} i & 0 \\ 
 c \, i  & - \frac{c^2}{2} i & 1+ \frac{c^2}{2} & 0 \\ 
 0 & 0 & 0 & 1 
 \end{bmatrix} 
  \begin{bmatrix} 
 {\phi}_{0} \\ {\phi}_{1} \\  {\phi}_{2} \\ {\phi}_{3}
 \end{bmatrix} 
 =
   \begin{bmatrix} 
     c \, G^2  \Psi      \\    \frac{1}{2}  \left(1 + \left(c^2 -1\right)G^2 \right)  \Psi      \\  
    \frac{i}{2} \left(1 +\left(c^2 +1\right)G^2 \right)  \Psi      \\    G  \Psi  
 \end{bmatrix} 
 =  \begin{bmatrix} 
 {\widehat{\phi}}_{0} \\  {\widehat{\phi}}_{1} \\   {\widehat{\phi}}_{2} \\  {\widehat{\phi}}_{3}
 \end{bmatrix} 
 \]
should be also null. The conformal factor of the conformal metric on  ${\Sigma}^{c}$ is equal to
\[
 \frac{1}{2} \left(  \sum_{k=0}^{3} {\vert  {\widehat{\,\phi\,}}_{k}   \vert}^2  \right) =   \frac{1}{4}\left(  {\vert \Psi \vert}^2 \, {\vert   1 + c^2 G^2   \vert}^{2}  \left( 1 + \frac{  {\vert G \vert}^{2} }{ { \vert 1 +i c G \vert  }^{2}} \right)  \left( 1 + \frac{  {\vert G \vert}^{2} }{ { \vert 1 -i c G \vert  }^{2}} \right)  \right).
\]
The definition of ${\widehat{\,\phi \,}}$ gives the following equality, which implies the degeneracy of ${\Sigma}^{c}$: 
\[
   {\widehat{\,\phi\,}}_{0} + c  {\widehat{\,\phi\,}}_{1} +  i c   {\widehat{\,\phi\,}}_{2} = {\phi}_{0} =0. 
\]
\end{proof}

 \begin{remark}  \label{lawson def} 
 We  used the parabolic rotations of holomorphic null curves in ${\mathbb{C}}^{4}$ in Lemma \ref{deform 1} to construct 
 deformations  (Theorem \ref{thm:main}) of minimal surfaces in ${\mathbb{R}}^{3}$ to a family of minimal surfaces in  ${\mathbb{R}}^{4}$. 
  These deformations of simply connected minimal surfaces in  ${\mathbb{R}}^{3}$ to minimal surfaces in ${\mathbb{R}}^{3}$ 
or ${\mathbb{R}}^{4}$ are non-isometric deformations, in general.  Indeed,  H. Lawson \cite[Theorem 1]{Law71} used E. Calabi's Theorem \cite{Cal53} to determine when minimal surfaces 
in ${\mathbb{R}}^{n \geq 3}$ are isometric to minimal surfaces in ${\mathbb{R}}^{3}$. See also \cite[Theorem 1.2]{MM2015}.  
 \end{remark}
  
 \begin{remark}
We point out that the holomorphic null curves in ${\mathbb{C}}^{4}$ also naturally appears in the theory of superconformal surfaces in ${\mathbb{R}}^{4}$. For instance, see \cite{DT09, Mor09}. 
 \end{remark}
 
Taking the constant $c = \tan \theta \in \mathbb{R}$ in Theorem \ref{thm:main} and rotating coordinate system in the ambient space
 ${\mathbb{R}}^{4}$, we have the following deformation:
   
 \begin{corollary}[\textbf{Degenerate minimal surfaces in ${\mathbb{R}}^{4}$}]   \label{main:thm2}
 Let $\Sigma$ be a simply connected minimal surface in ${\mathbb{R}}^{3}$, up to translations, parametrized by the conformal harmonic immersion
  \begin{equation}
  {\mathbf{X}}\left( \zeta \right)   
 = \left(    \textrm{Re} \int  {\phi}_{1}\left( \zeta \right)    d\zeta, \, \textrm{Re} \int  {\phi}_{2}\left( \zeta \right)    d\zeta, \, \textrm{Re} \int  {\phi}_{3}\left( \zeta \right)    d\zeta \right), 
  \end{equation}
  where the holomorphic null curve $  {\phi} = \left(   {\phi}_{1}, \,   {\phi}_{2}, \,   {\phi}_{3}\right)$ admits the Weierstrass data $(G(\zeta), \Psi(\zeta) d\zeta):$ 
      \begin{equation} \label{R3 w data}
 {\phi} = 
   \left(  \frac{1}{2}  \left(1 -G^2 \right)  \Psi, \,  \frac{i}{2} \left(1 +G^2 \right)  \Psi, \,   G  \Psi  \right),
  \end{equation}
 For each angle constant $\theta \in \left( -\frac{\pi}{2}, \frac{\pi}{2} \right)$, there exists a degenerate minimal surface ${\Sigma}^{\tan \theta}$ in ${\mathbb{R}}^{4}$, up to translations, parametrized by the conformal harmonic immersion
    \begin{equation}
  {\mathbf{X}}^{\tan \theta} \left( \zeta \right)   
 = \left( \textrm{Re} \int   {\widetilde{\,\phi\,}}_{0}\left( \zeta \right) d\zeta, \,    \textrm{Re} \int   {\widetilde{\,\phi\,}}_{1}\left( \zeta \right) d\zeta, \, \textrm{Re} \int   {\widetilde{\,\phi\,}}_{2}\left( \zeta \right) d\zeta, \, \textrm{Re} \int   {\widetilde{\,\phi\,}}_{3}\left( \zeta \right) d\zeta \right),
  \end{equation}
  where the holomorphic curve   $\left( {\widetilde{\,\phi\,}}_{0}= -i \left( \sin \theta \right) {\widehat{\,\phi\,}}_{2}, \, {\widetilde{\,\phi\,}}_{1}, \,  {\widetilde{\,\phi\,}}_{2}, {\widehat{\,\phi\,}}_{3}\right)$ is 
  determined by 
  \begin{equation}  
   {\widetilde{\,\phi \,}}
= \left(   \frac{\sin \theta}{2} \left(1 + \frac{ G^2}{{\cos}^{2} \theta} \right)   \Psi , \,  \frac{\cos \theta}{2}  \left(1 - \frac{ G^2}{{\cos}^{2} \theta} \right)  \Psi  , \,  \frac{i}{2} \left(1 + \frac{ G^2}{{\cos}^{2} \theta} \right)   \Psi, \,   G  \Psi    \right). 
 \end{equation}
  \end{corollary}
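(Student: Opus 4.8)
The plan is to realize the statement as Theorem~\ref{thm:main} specialized to the real parameter $c = \tan\theta$, followed by a rigid rotation of the ambient ${\mathbb{R}}^{4}$ in the $x_{0}x_{1}$-plane. First I would apply Theorem~\ref{thm:main} with $c = \tan\theta$; the hypothesis $\theta \in \left(-\frac{\pi}{2}, \frac{\pi}{2}\right)$ ensures $\tan\theta \in \mathbb{R}$, so this produces the degenerate minimal surface ${\Sigma}^{\tan\theta}$ whose holomorphic null curve ${\widehat{\,\phi\,}}$ is given by \eqref{deformation w data}. Substituting $c = \tan\theta$ and using $c^{2} + 1 = \frac{1}{{\cos}^{2}\theta}$ rewrites the last two components as ${\widehat{\,\phi\,}}_{2} = \frac{i}{2}\left( 1 + \frac{G^{2}}{{\cos}^{2}\theta} \right)\Psi$ and ${\widehat{\,\phi\,}}_{3} = G\Psi$, which already coincide with the asserted ${\widetilde{\,\phi\,}}_{2}$ and ${\widetilde{\,\phi\,}}_{3}$; hence only the $0$- and $1$-components require reshuffling.

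To reshuffle them I would introduce the real orthogonal matrix
\[
 R_{\theta} =
 \begin{bmatrix}
 \cos\theta & \sin\theta & 0 & 0 \\
 -\sin\theta & \cos\theta & 0 & 0 \\
 0 & 0 & 1 & 0 \\
 0 & 0 & 0 & 1
 \end{bmatrix}
 \in \mathbf{SO}(4) \subset \mathbf{O}\left(4, \mathbb{C}\right),
\]
and define ${\widetilde{\,\phi\,}} = R_{\theta}\, {\widehat{\,\phi\,}}$. Since $R_{\theta}$ has real entries, it commutes with $\textrm{Re}\int (\cdot)\, d\zeta$, so the resulting immersion ${\mathbf{X}}^{\tan\theta}$ is literally the image of the surface of Theorem~\ref{thm:main} under the rigid rotation $R_{\theta}$; being a Euclidean isometry, $R_{\theta}$ preserves both minimality and degeneracy. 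Alternatively, $R_{\theta} \in \mathbf{O}\left(4, \mathbb{C}\right)$ keeps ${\widetilde{\,\phi\,}}$ on the null cone ${\mathcal{Q}}_{2}$, so it again defines a minimal surface. It then remains only to confirm that $R_{\theta}$ outputs the advertised first two components.

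For this I would expand ${\widetilde{\,\phi\,}}_{0} = \cos\theta\, {\widehat{\,\phi\,}}_{0} + \sin\theta\, {\widehat{\,\phi\,}}_{1}$ and ${\widetilde{\,\phi\,}}_{1} = -\sin\theta\, {\widehat{\,\phi\,}}_{0} + \cos\theta\, {\widehat{\,\phi\,}}_{1}$ using ${\widehat{\,\phi\,}}_{0} = \tan\theta\, G^{2}\Psi$ and ${\widehat{\,\phi\,}}_{1} = \frac{1}{2}\left( 1 + \left( \tan^{2}\theta - 1 \right)G^{2} \right)\Psi$, then collect the coefficients of $\Psi$ and $G^{2}\Psi$. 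The identities $1 + \tan^{2}\theta = \frac{1}{{\cos}^{2}\theta}$ and ${\sin}^{2}\theta + {\cos}^{2}\theta = 1$ collapse the two $G^{2}\Psi$-coefficients to $\frac{\sin\theta}{2{\cos}^{2}\theta}$ and $-\frac{1}{2\cos\theta}$, yielding exactly ${\widetilde{\,\phi\,}}_{0} = \frac{\sin\theta}{2}\left( 1 + \frac{G^{2}}{{\cos}^{2}\theta} \right)\Psi$ and ${\widetilde{\,\phi\,}}_{1} = \frac{\cos\theta}{2}\left( 1 - \frac{G^{2}}{{\cos}^{2}\theta} \right)\Psi$. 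The relation ${\widetilde{\,\phi\,}}_{0} = -i\left( \sin\theta \right){\widehat{\,\phi\,}}_{2}$ needs no separate computation: multiplying the degeneracy identity ${\widehat{\,\phi\,}}_{0} + c\,{\widehat{\,\phi\,}}_{1} + i c\,{\widehat{\,\phi\,}}_{2} = 0$ of Theorem~\ref{thm:main} by $\cos\theta$ gives $\cos\theta\,{\widehat{\,\phi\,}}_{0} + \sin\theta\,{\widehat{\,\phi\,}}_{1} = -i\sin\theta\,{\widehat{\,\phi\,}}_{2}$, which is precisely ${\widetilde{\,\phi\,}}_{0} = -i\sin\theta\,{\widehat{\,\phi\,}}_{2}$.

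I would expect the only non-mechanical step to be guessing the correct rotation $R_{\theta}$; its angle is forced by the requirement that the new $0$-component be proportional to ${\widehat{\,\phi\,}}_{2}$, which is the normal form exhibiting the degeneracy of ${\Sigma}^{\tan\theta}$ as a single linear relation between two ambient coordinates. Once the angle is recognized as $\theta$ itself, everything else reduces to the elementary trigonometric simplifications above.
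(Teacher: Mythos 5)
Your proposal is correct and follows essentially the same route as the paper: the paper also specializes Theorem \ref{thm:main} to $c=\tan\theta$ and then passes to the rotated orthonormal frame ${\mathbf{E}}_{0}=\cos\theta\,{\mathbf{e}}_{0}+\sin\theta\,{\mathbf{e}}_{1}$, ${\mathbf{E}}_{1}=-\sin\theta\,{\mathbf{e}}_{0}+\cos\theta\,{\mathbf{e}}_{1}$, which is exactly your rotation $R_{\theta}$ acting on the $x_{0}x_{1}$-plane, followed by the same trigonometric simplifications. Your extra observation that ${\widetilde{\,\phi\,}}_{0}=-i\left(\sin\theta\right){\widehat{\,\phi\,}}_{2}$ falls out of the degeneracy identity ${\widehat{\,\phi\,}}_{0}+c\,{\widehat{\,\phi\,}}_{1}+ic\,{\widehat{\,\phi\,}}_{2}=0$ multiplied by $\cos\theta$ is a clean touch the paper leaves implicit, but it does not change the argument.
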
 

\begin{proof}
We take $c=\tan \theta$  in Theorem \ref{thm:main}. With respect to the standard frame ${\mathbf{e}}_{0}=(1,0,0,0)$, ${\mathbf{e}}_{1}=(0,1,0,0)$, ${\mathbf{e}}_{2}=(0,0,1,0)$, ${\mathbf{e}}_{3}=(0,0,0,1)$, the surface ${\Sigma}^{\tan \theta}$ admits the patch 
\[
 \left( \textrm{Re} \int   {\widehat{\phi}}_{0}\left( \zeta \right) d\zeta \right) {\mathbf{e}}_{0}
+ \left(  \textrm{Re} \int   {\widehat{\phi}}_{1}\left( \zeta \right) d\zeta \right) {\mathbf{e}}_{1} 
+ \left( \textrm{Re} \int   {\widehat{\phi}}_{2}\left( \zeta \right) d\zeta \right) {\mathbf{e}}_{2}
+\left( \textrm{Re} \int   {\widehat{\phi}}_{3}\left( \zeta \right) d\zeta \right) {\mathbf{e}}_{3},
\]
  where the holomorphic curve  $    {\widehat{\,\phi \,}} = \left( {\widehat{\,\phi\,}}_{0}, \, {\widehat{\,\phi\,}}_{1}, \,  {\widehat{\,\phi\,}}_{2}, {\widehat{\,\phi\,}}_{3}\right)$ reads
\[
   {\widehat{\,\phi \,}}
= \left(  \tan \theta \, G^2  \Psi, \,  \frac{1}{2}  \left(1 + \left(-1 + {\tan}^2 \theta \right)G^2 \right)  \Psi  , \,  \frac{i}{2} \left(1 +\left(1+ {\tan}^2 \theta  \right)G^2 \right)  \Psi, \,   G  \Psi    \right).
\]
With the new frame ${\mathbf{E}}_{0}=\cos \theta\, {\mathbf{e}}_{0} + \sin \theta\, {\mathbf{e}}_{1}$, ${\mathbf{E}}_{1}=-\sin \theta\, {\mathbf{e}}_{0} + \cos \theta\, {\mathbf{e}}_{1}$, ${\mathbf{E}}_{2}={\mathbf{e}}_{2}$, $ {\mathbf{E}}_{3}={\mathbf{e}}_{3}$, the minimal surface ${\Sigma}^{\tan \theta}$ admits 
the patch 
\[
 \left( \textrm{Re} \int   {\widetilde{\phi}}_{0}\left( \zeta \right) d\zeta \right) {\mathbf{E}}_{0}
+ \left(  \textrm{Re} \int   {\widetilde{\phi}}_{1}\left( \zeta \right) d\zeta \right) {\mathbf{E}}_{1} 
+ \left( \textrm{Re} \int   {\widetilde{\phi}}_{2}\left( \zeta \right) d\zeta \right) {\mathbf{E}}_{2}
+\left( \textrm{Re} \int   {\widetilde{\phi}}_{3}\left( \zeta \right) d\zeta \right) {\mathbf{E}}_{3}.
\]
\end{proof}

 \section{Minimal surfaces in ${\mathbb{R}}^{4}$ foliated by conic sections}
  
We present applications of our deformations of minimal surfaces in ${\mathbb{R}}^{3}$ to produce new minimal surfaces in ${\mathbb{R}}^{4}$. 
Applying our deformation to holomorphic null curves in ${\mathbb{C}}^{3} \subset {\mathbb{C}}^{4}$ induced by helicoids in ${\mathbb{R}}^{3}$, we discover minimal surfaces in ${\mathbb{R}}^{4}$ foliated by conic sections with eccentricity grater than $1$: hyperbolas or straight lines. Applying our deformation to   holomorphic null curves in ${\mathbb{C}}^{3}$ induced by catenoids in ${\mathbb{R}}^{3}$, we can rediscover the Hoffman-Osserman minimal surfaces in ${\mathbb{R}}^{4}$ foliated by conic sections with eccentricity smaller than $1$: ellipses or circles.

\begin{example}[\textbf{From helicoids in ${\mathbb{R}}^{3}$ to minimal surfaces in ${\mathbb{R}}^{4}$ foliated by conic sections with eccentricity grater than $1$: hyperbolas or straight lines}] \label{deformation of helicoids} Let's begin with the helicoid $\Sigma = \{ \left( -\sinh u \sin v, \sinh u \cos v, v \right) \in {\mathbb{R}}^{3}  \;
\vert \; u, v \in \mathbb{R} \}$ foliated by horizontal lines. The global conformal coordinate $\zeta=u+iv \in \mathbb{C}$ on $\Sigma$ induces the Weierstrass data 
\[
\left(G(\zeta), \Psi(\zeta)   d\zeta\right)=\left(  e^{\zeta}, -i e^{-\zeta}   d\zeta\right).
\]
The helicoid $\Sigma$ is, up to translations, given by the conformal harmonic immersion
\[
  {\mathbf{X}}\left( \zeta \right)   
 =  \textrm{Re}  \left(    \int  \frac{1}{2}  \left(1 -G^2 \right)  \Psi d\zeta, \,  \int \frac{i}{2} \left(1 +G^2 \right)  \Psi d\zeta, \,  \int  G  \Psi d\zeta\right).
\]
 Let $c = \alpha + i \beta \in \mathbb{R} + i  \mathbb{R}$ be the deformation constant. Applying Theorem \ref{thm:main} to the Weierstrass data $\left(G(\zeta), \Psi(\zeta)\right)=\left(  e^{\zeta}, -i e^{-\zeta} \right)$, we obtain the minimal surface ${\Sigma}^{c}$ in ${\mathbb{R}}^{4}$, up to translations, parametrized by the conformal harmonic map ${\mathbf{X}}^{c} \left( \zeta \right):$   
\[
  {\mathbf{X}}^{c}  
 =   \textrm{Re} \left( \int  c \, G^2  \Psi d\zeta, \,   \int \frac{1}{2}  \left(1 + \left(c^2 -1\right)G^2 \right)  \Psi  d\zeta, \,   \int 
 \frac{i}{2} \left(1 +\left(c^2 +1\right)G^2 \right)  \Psi d\zeta, \,  \int G  \Psi d\zeta \right).
\]
With the frame ${\mathbf{e}}_{0}=(1,0,0,0)$, ${\mathbf{e}}_{1}=(0,1,0,0)$, ${\mathbf{e}}_{2}=(0,0,1,0)$, ${\mathbf{e}}_{3}=(0,0,0,1)$, we write 
\[
  {\mathbf{X}}^{c}  =   {\mathbf{X}}_0 {\mathbf{e}}_0 +   {\mathbf{X}}_1 {\mathbf{e}}_1 +   {\mathbf{X}}_2 {\mathbf{e}}_2+   {\mathbf{X}}_3 {\mathbf{e}}_3
\]
and obtain 
\begin{eqnarray*}
  {\mathbf{X}}_{0}(u, v) &=&  e^u \left( \alpha \sin v + \beta \cos v \right), \\
  {\mathbf{X}}_{1}(u, v) &=&  e^u \left( \frac{  {\alpha}^{2} -{\beta}^{2} - 1  }{2} \sin v + \alpha \beta \cos v \right) + e^{-u} \left(  \frac{\sin v}{2} \right), \\
  {\mathbf{X}}_{2}(u, v) &=&  e^u \left( \frac{ -  \alpha \beta \sin v + {\alpha}^{2} -{\beta}^{2} +1  }{2} \cos v  \right) + e^{-u} \left(  \frac{-\cos v}{2} \right), \\
  {\mathbf{X}}_{3}(u, v) &=&  v. 
\end{eqnarray*}
We show that the minimal surface ${\Sigma}^{c}$ is foliated by hyperbolas or lines. 
We introduce new orthonormal frame 
 \begin{eqnarray*}
   {\mathbf{E}}_0 &=&   \frac{1}{\sqrt{ {\alpha}^{2}+{\beta}^{2}+1}} \left( e_{0} + \alpha e_{1} - \beta e_{2}   \right), \\
  {\mathbf{E}}_1 &=&      \frac{1}{\sqrt{{\alpha}^{2} +1}} \left( - \alpha e_{0} +   e_{1}   \right),  \\
   {\mathbf{E}}_2  &=&    \frac{1}{\sqrt{{\beta}^{2} +1}} \left( \, \beta e_{0} + e_{2}   \right), \\
   {\mathbf{E}}_3 &=&  e_{3},
  \end{eqnarray*}
and prepare  two auxiliary functions
\begin{eqnarray*}
 {\mathbf{Ch}}(u) &=& \frac{1}{2} \left( \sqrt{ {\alpha}^{2}+{\beta}^{2}+1} \, e^u + \frac{1}{ \sqrt{ {\alpha}^{2}+{\beta}^{2}+1} \, e^u} \right) 
 =\cosh \left( u + \ln \left(  \sqrt{ {\alpha}^{2}+{\beta}^{2}+1} \right)  \right), \\
 {\mathbf{Sh}}(u) &=& \frac{1}{2} \left( \sqrt{ {\alpha}^{2}+{\beta}^{2}+1} \, e^u - \frac{1}{ \sqrt{ {\alpha}^{2}+{\beta}^{2}+1} \, e^u} \right)
 =\sinh \left( u + \ln \left(  \sqrt{ {\alpha}^{2}+{\beta}^{2}+1} \right)  \right). \\
\end{eqnarray*} 
Each components of the patch ${\mathbf{X}}^{c}(u, v)  =   {\Xi}_0 {\mathbf{E}}_0 +   {\Xi}_1 {\mathbf{E}}_1 +   {\Xi}_2 {\mathbf{E}}_2+   {\Xi}_3 {\mathbf{E}}_3$
  are given by 
\begin{eqnarray*}
  {\Xi}_{0}(u, v) &=&  \quad \; \; \, \left( \alpha \sin v + \beta \cos v \right)  {\mathbf{Ch}}(u), \\
  {\Xi}_{1}(u, v) &=&   - \frac{ \sqrt{ {\alpha}^{2}+{\beta}^{2}+1} }{ \sqrt{ {\alpha}^{2} +1}  }    \sin v  \;  {\mathbf{Sh}}(u), \\
  {\Xi}_{2}(u, v) &=&   - \frac{ \sqrt{ {\alpha}^{2}+{\beta}^{2}+1} }{ \sqrt{ {\beta}^{2} +1}  }    \cos v  \;  {\mathbf{Sh}}(u), \\
  {\Xi}_{3}(u, v) &=&  v.
\end{eqnarray*}
 When the deformation constant $c = \alpha + i \beta \in \mathbb{R} + i  \mathbb{R}$ is equal to zero, the surface ${\Sigma}_{c=0}$  given by the patch ${\mathbf{X}}^{c=0}$ recovers the helicoid. Now now on, consider the case $\left(\alpha, \beta\right) \neq (0,0)$. Fixing the last coordinate $v=v_0$ on $\Sigma$, we examine the level set ${\mathcal{C}}_{v_0} := {\Sigma}^{c} \cap \left\{ {\Xi}_{3}=v_{0} \right\} $ given by  
\[
{\Xi}_0 \left(u, v_0 \right) {\mathbf{E}}_{0} +   {\Xi}_1 \left(u, v_0 \right) {\mathbf{E}}_{1} +   {\Xi}_2 \left(u, v_0 \right) {\mathbf{E}}_{2} + v_0 
{\mathbf{E}}_{3}.
\]
Translating the level set ${\mathcal{C}}_{v_0}$ in the $ {\mathbf{E}}_{0}$ direction yields the curve given by 
\[
{\mathbf{c}}(u) = {\Xi}_0 \left(u, v_0 \right) {\mathbf{E}}_{0} +   {\Xi}_1 \left(u, v_0 \right) {\mathbf{E}}_{1} +   {\Xi}_2 \left(u, v_0 \right) {\mathbf{E}}_{2}.  
\] 
We introduce the new orthonormal frame 
\begin{eqnarray*}
   {\epsilon}_0 &=&  {\mathbf{E}}_{0}, \\
   {\epsilon}_1 &=&  {\epsilon}_1 \left( v_0 \right) =  \frac{1}{ \sqrt{ \frac{  \sin^{2} v_0  }{   {\alpha}^{2} +1   }  +       \frac{  \cos^{2} v_0  }{   {\beta}^{2} +1   } } } \left(  \frac{   \sin v_0 }{ \sqrt{ {\alpha}^{2} +1}  }   {\mathbf{E}}_1 +  \frac{  \cos v_0  }{ \sqrt{ {\beta}^{2} +1}  }       {\mathbf{E}}_2 \right),  \\
  {\epsilon}_2  &=&   {\epsilon}_2 \left( v_0 \right) =  \frac{1}{ \sqrt{ \frac{  \sin^{2} v_0 }{   {\alpha}^{2} +1   }  +       \frac{  \cos^{2} v_0  }{   {\beta}^{2} +1   } } } \left(   \frac{  \cos v_0  }{ \sqrt{ {\beta}^{2} +1}  }   {\mathbf{E}}_1 - \frac{   \sin v_0 }{ \sqrt{ {\alpha}^{2} +1}  }   {\mathbf{E}}_2 \right).  
\end{eqnarray*}
to rewrite ${\mathbf{c}}(u) = \mathbf{x}(u) {\epsilon}_0 +  \mathbf{y}(u)   {\epsilon}_1 +  \mathbf{z}(u) {\epsilon}_{2}$ with components 
\begin{eqnarray*}
\mathbf{x}  &=& {\mathbf{c}}(u)  \cdot {\epsilon}_0   =   \left( \alpha \sin v_0 + \beta \cos v_0 \right) \, {\mathbf{Ch}}(u),   \\
\mathbf{y}  &=&   {\mathbf{c}}(u)  \cdot {\epsilon}_1 =   - \sqrt{  \left( {\alpha}^{2}+{\beta}^{2}+1 \right) \left(   \frac{  \sin^{2} v_0  }{   {\alpha}^{2} +1   }  +       \frac{  \cos^{2} v_0  }{   {\beta}^{2} +1   }       \right) } \, {\mathbf{Sh}}(u),    \\
\mathbf{z}  &=&  {\mathbf{c}}(u)  \cdot {\epsilon}_2  =0,   
\end{eqnarray*}   
 We distinguish two cases:
\begin{enumerate}
\item When $\alpha \sin v_0 + \beta \cos v_0  \neq 0$, the level set  ${\mathcal{C}}_{v_0}$ is congruent to the hyperbola
\[
   {\left(  \frac{  \mathbf{x}  }{   \alpha \sin v_0 + \beta \cos v_0  }    \right)}^{2} -   {\left(    \frac{\mathbf{y}}{  \sqrt{  \left( {\alpha}^{2}+{\beta}^{2}+1 \right) \left(   \frac{  \sin^{2} v_0  }{   {\alpha}^{2} +1   }  +       \frac{  \cos^{2} v_0  }{   {\beta}^{2} +1   }       \right) }  }  \right)}^{2} = 1,
\]
 which has two orthogonal asymptotic lines $\mathbf{x} = \pm \frac{ \, \alpha \sin v_0 + \beta \cos v_0 \, } 
 {      \sqrt{   \frac{  \sin^{2} v_0  }{   {\alpha}^{2} +1   }  +       \frac{  \cos^{2} v_0  }{   {\beta}^{2} +1   } \,   }   } \, \mathbf{y}$.
\item When $\alpha \sin v_0 + \beta \cos v_0 =0$,  we have ${\mathbf{x}}_{0}  \equiv 0$ and ${\mathbf{z}}_{0}  \equiv 0$. This means that the level set curve ${\mathcal{C}}_{v_0}$ becomes 
a line.
\end{enumerate} 
\end{example}

\begin{example}[\textbf{From catenoids in ${\mathbb{R}}^{3}$ to the Hoffman-Osserman minimal surfaces in ${\mathbb{R}}^{4}$ foliated by conic sections with eccentricity smaller than $1$: ellipses or circles}] \label{deformation of catenoids} The catenoid of the neck radius $1$ is, up to translations, given by
\[
  {\mathbf{X}}\left( \zeta \right)   
 =  \textrm{Re}  \left(    \int  \frac{1}{2}  \left(1 -G^2 \right)  \Psi d\zeta, \,  \int \frac{i}{2} \left(1 +G^2 \right)  \Psi d\zeta, \,  \int  G  \Psi d\zeta\right),
\]
with the Weierstrass data 
$\left(G(\zeta), \Psi(\zeta)   d\zeta\right)=\left(  \zeta, \frac{1}{{\zeta}^2}  d\zeta \right),  \; \text{where} \; \zeta =u+iv \in \mathbb{C}-\{0\}$.
 Let $c \in \mathbb{C}$ be the deformation constant. Applying Theorem \ref{thm:main} to the Weierstrass data $\left(G(\zeta), \Psi(\zeta)   d\zeta\right)$, we obtain the minimal surface ${\Sigma}^{c}$ in ${\mathbb{R}}^{4}$ with the patch
\[
{\mathbf{X}}^{c}\left( \zeta \right)    =   \textrm{Re} \left( \int  {\widehat{\phi}}_{0} \, d\zeta, \,   \int {\widehat{\phi}}_{1}  \, d\zeta, \,   \int 
{\widehat{\phi}}_{2} \, d\zeta, \,  \int {\widehat{\phi}}_{3} \, d\zeta \right).
\]  
Here, the holomorphic null curve is given by
\begin{equation}  \label{HO curve}
 \begin{bmatrix} 
 {\widehat{\phi}}_{0} \\  {\widehat{\phi}}_{1} \\   {\widehat{\phi}}_{2} \\  {\widehat{\phi}}_{3}
 \end{bmatrix}  
 = 
    \begin{bmatrix} 
     c \, G^2  \Psi      \\    \frac{1}{2}  \left(1 + \left(c^2 -1\right)G^2 \right)  \Psi      \\  
    \frac{i}{2} \left(1 +\left(c^2 +1\right)G^2 \right)  \Psi      \\    G  \Psi  
 \end{bmatrix} 
 = 
    \begin{bmatrix} 
     c       \\    \frac{1}{2}  \left(   \frac{1}{z^2} + c^2 -1 \right) \\  
    \frac{i}{2} \left(   \frac{1}{z^2} + c^2 +1 \right)     \\   \frac{1}{z} 
 \end{bmatrix}, 
\end{equation}
which recovers the holomorphic data \cite[Proposition 6.6]{HO80} of the Hoffman-Osserman minimal surfaces foliated by 
ellipses or circles \cite[Remark 1]{HO80}. The conformal harmonic immersion 
of the Hoffman-Osserman minimal surfaces in ${\mathbb{R}}^{5}$ should read 
\[
\mathbf{X} = \mathrm{Re} \left(   d_1 \zeta  - \frac{C}{\zeta}, \, d_2 \zeta  - i \frac{C}{\zeta},  \, \alpha \log \zeta, \,  d_4 z,  \, d_5 z \right).
\]
Here, $d_1$, $d_2$, $C$, $d_4$, $d_5$ are complex constants and $\alpha$ is a positive real constant satisfying 
\[
 \left( d_{1}, d_{2} \right) =  \left(  \frac{C}{{\alpha}^{2}}     {\left(   {d_4}^{2}  + {d_5}^{2}   \right)}  - \frac{{\alpha}^{2}}{4C} ,    
        i    \left( \frac{C}{{\alpha}^{2} }    {\left(   {d_4}^{2}  + {d_5}^{2}   \right)}   + \frac{{\alpha}^{2}}{4C}    \right)   \right),
\]
which guarantees that the induced holomorphic null curve in ${\mathbb{C}}^{5}:$
\begin{equation} \label{HO original}
\left(   d_1  + \frac{C}{{\zeta}^{2}}, \, d_2  + i \frac{C}{{\zeta}^{2}},  \, \frac{\alpha}{\zeta}, \,  d_{4},  \, d_{5} \right).
\end{equation}
Taking the normalization $\left( d_{1}, d_{2}, C, d_{4}, d_{5}, \alpha \right) = \left( \frac{1}{2}  ( c^2 -1 ), \frac{i }{2}  ( c^2 +1 ), \frac{1}{2}, c, 0, 1 \right)$ in 
 the Hoffman-Osserman curve (\ref{HO original}), we recover the holomorphic curve, which is equivalent to (\ref{HO curve}).  
\end{example}

\begin{theorem}[\textbf{Minimal surfaces in ${\mathbb{R}}^{4}$ spanned by circles at infinity}] \label{twocircles} 
Given a constant $\theta \in \left(0, \frac{\pi}{2}\right)$, we define the minimal surface ${\Sigma}^{\tan \theta}$ defined by the conformal harmonic mapping 
\[
  {\mathbf{X}}^{\tan \theta} (U, V) = \left( \frac{\sin \theta}{\cos \theta}  \sinh U \cos V,  \cosh U \cos V,  \frac{1}{\cos \theta}  \cosh U \sin V, U \right), \;
  \left(U, V\right) \in {\mathbb{R}}^{2}. 
\]
Then, we have the following properties.
\begin{enumerate}
\item  For each constant height ${\mathbf{x}}_{4}=U_0$, the level set  ${\mathcal{C}}_{U_0}= {\Sigma}^{\tan \theta} \cap \{{\mathbf{x}}_{4}=U_0\}$ is an ellipse. In particular, the neck ${\mathcal{C}}_{0}={\Sigma}^{\tan \theta} \cap \{{\mathbf{x}}_{4}=0\}$ is congruent to $   {\mathbf{x}}^{2}   + \left(  {\cos}^{2} \theta  \right) \,   {\mathbf{y}}^{2} =1$. 
\item When $U$ approaches to $\infty$ (or $-\infty$), the ellipse ${\mathcal{C}}_{U}$ converges to a circle. 
\end{enumerate}
\end{theorem}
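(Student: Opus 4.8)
The plan is to first confirm that $\Sigma^{\tan\theta}$ is genuinely minimal and then read the two geometric properties directly off the explicit level sets. Writing $\zeta = U + iV$ and computing the associated complex curve $\phi = 2\,\frac{d\mathbf{X}^{\tan\theta}}{d\zeta}$, I would obtain
\[
\phi = \left(\tan\theta\,\cosh\zeta,\ \sinh\zeta,\ -\tfrac{i}{\cos\theta}\cosh\zeta,\ 1\right),
\]
whose entries are all holomorphic. Using $\cosh^2\zeta - \sinh^2\zeta = 1$ together with $\tan^2\theta + 1 = \sec^2\theta$, the sum $\sum_k \phi_k^2$ collapses to $\cosh^2\zeta\,(\tan^2\theta + 1 - \sec^2\theta) = 0$, so $\phi$ is a holomorphic null curve and $\mathbf{X}^{\tan\theta}$ is conformal and harmonic; hence $\Sigma^{\tan\theta}$ is minimal. (One checks that this is the surface produced by Corollary \ref{main:thm2} from a catenoid, giving an alternative derivation of minimality.)

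For part (1), I would fix the height $\mathbf{x}_4 = U_0$ and view $\mathcal{C}_{U_0}$ as a curve in the $\mathbf{x}_1\mathbf{x}_2\mathbf{x}_3$-space parametrized by $V$. The decisive observation is that
\[
\mathbf{x}_1 = \tan\theta\,\sinh U_0\cos V, \qquad \mathbf{x}_2 = \cosh U_0\cos V
\]
are both proportional to $\cos V$, so the curve equals $\cos V\cdot\mathbf{v} + \sin V\cdot w\,\mathbf{e}_3$ with $\mathbf{v} := (\tan\theta\,\sinh U_0,\ \cosh U_0,\ 0)$, $\mathbf{e}_3 := (0,0,1)$, and $w := \frac{1}{\cos\theta}\cosh U_0$. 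Since $\mathbf{v}\perp\mathbf{e}_3$, passing to the orthonormal frame $(\mathbf{v}/\norm{\mathbf{v}},\ \mathbf{e}_3)$ displays $\mathcal{C}_{U_0}$ as $(\norm{\mathbf{v}}\cos V,\ w\sin V)$, an ellipse with semi-axes $\norm{\mathbf{v}}$ and $w$. For the neck $U_0 = 0$ we have $\mathbf{v} = (0,1,0)$, so the curve reduces to $(\mathbf{x}_2,\mathbf{x}_3) = (\cos V,\ \frac{1}{\cos\theta}\sin V)$; eliminating $V$ yields $\mathbf{x}_2^2 + \cos^2\theta\,\mathbf{x}_3^2 = 1$, the stated normal form.

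For part (2), I would track the ratio of squared semi-axes,
\[
\frac{\norm{\mathbf{v}}^2}{w^2} = \frac{\sin^2\theta\,\sinh^2 U + \cos^2\theta\,\cosh^2 U}{\cosh^2 U} = \sin^2\theta\,\tanh^2 U + \cos^2\theta.
\]
Because $\tanh^2 U\to 1$ as $U\to\pm\infty$, this ratio tends to $\sin^2\theta + \cos^2\theta = 1$, so the eccentricity of $\mathcal{C}_U$ tends to $0$ — precisely the assertion that the ellipses converge to circles.

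The only delicate point, and the one I would be careful to articulate, is the meaning of ``converges to a circle'': both semi-axes $\norm{\mathbf{v}(U)}$ and $w(U)$ grow without bound as $U\to\pm\infty$, so the level curves themselves have no pointwise limit. The convergence is therefore a statement about the \emph{shape} of $\mathcal{C}_U$, measured by the ratio of its semi-axes (equivalently, by its eccentricity) approaching the circular value. Once this interpretation is made explicit, both claims reduce entirely to the two displayed identities above, with no further analytic input required.
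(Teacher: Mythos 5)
Your proposal is correct, and for the two enumerated claims it coincides with the paper's own argument: the paper likewise exhibits the level set ${\mathcal{C}}_{U_0}$ as an ellipse with semi-axes $r_1 = \sqrt{\left(\tan\theta\,\sinh U\right)^2 + \cosh^2 U}$ and $r_2 = \frac{1}{\cos\theta}\cosh U$ (exactly your $\norm{\mathbf{v}}$ and $w$), and proves (2) by the same ratio computation, $\lim_{\vert U\vert\to\infty} r_2^2/r_1^2 = \frac{\sec^2\theta}{\tan^2\theta+1} = 1$, which is your identity $\norm{\mathbf{v}}^2/w^2 = \sin^2\theta\,\tanh^2 U + \cos^2\theta \to 1$ read upside down. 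Where you genuinely differ is in how minimality is established. The paper never verifies nullity from the explicit patch; instead it starts from the catenoid's Weierstrass data $(G,\Psi\,d\zeta) = (e^{\zeta}, e^{-\zeta}\,d\zeta)$, applies Corollary \ref{main:thm2} with $c=\tan\theta$, and then recovers the stated $\mathbf{X}^{\tan\theta}(U,V)$ through the substitution $(U,V)=(u-\ln\cos\theta,\,v)$ together with a reflection and a vertical translation. Your route --- checking directly that $\phi = 2\,d\mathbf{X}^{\tan\theta}/d\zeta = \left(\tan\theta\,\cosh\zeta,\ \sinh\zeta,\ -\frac{i}{\cos\theta}\cosh\zeta,\ 1\right)$ is holomorphic and null --- is more elementary and self-contained (and the constant last entry shows at once that the map is an immersion, a point the paper's bookkeeping leaves to the reader), but it forfeits what the paper's derivation buys: the identification of ${\Sigma}^{\tan\theta}$ as the complex parabolic rotation of the catenoid, which is the conceptual reason the theorem sits where it does and which also exhibits the surface as degenerate via Theorem \ref{thm:main}; you recover this only parenthetically. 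Your closing remark about the meaning of ``converges to a circle'' --- convergence of shape via eccentricity, since both semi-axes blow up as $\vert U\vert \to \infty$ --- is a fair articulation of what the paper leaves implicit in its ratio computation; it is a clarification, not the repair of a gap. Your explicit orthonormal-frame decomposition $\mathbf{c}(V) = \cos V\,\mathbf{v} + \sin V\, w\,\mathbf{e}_3$ with $\mathbf{v}\perp\mathbf{e}_3$ likewise fills in the paper's ``one can easily check'' for claim (1), including the neck computation, which matches the stated normal form $\mathbf{x}^2 + \cos^2\theta\,\mathbf{y}^2 = 1$.
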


\begin{proof}
We recall the classical Weierstrass data of the catenoid  in ${\mathbb{R}}^{3}$ with neck size $1$: 
\[
\left(z,  \frac{1}{z^2} dz \right) = \left( e^{\zeta}, e^{-\zeta} d\zeta \right) =: \left(G(\zeta), \Psi(\zeta)   d\zeta\right).
\]
Applying Corollary \ref{main:thm2}, we have the minimal surface in ${\mathbb{R}}^{4}$, up to translations, given by
\[
\left( {\mathbf{x}}_{0}, {\mathbf{x}}_{1}, {\mathbf{x}}_{2}, {\mathbf{x}}_{3} \right)  
 = \left( \textrm{Re} \int   {\widetilde{\,\phi\,}}_{0}\left( \zeta \right) d\zeta, \,    \textrm{Re} \int   {\widetilde{\,\phi\,}}_{1}\left( \zeta \right) d\zeta, \, \textrm{Re} \int   {\widetilde{\,\phi\,}}_{2}\left( \zeta \right) d\zeta, \, \textrm{Re} \int   {\widetilde{\,\phi\,}}_{3}\left( \zeta \right) d\zeta \right),
\]
  where the holomorphic curve   $ {\widetilde{\,\phi \,}} = \left( {\widetilde{\,\phi\,}}_{0}, \, {\widetilde{\,\phi\,}}_{1}, \,  {\widetilde{\,\phi\,}}_{2}, {\widehat{\,\phi\,}}_{3}\right)$ is  determined by 
\[
   {\widetilde{\,\phi \,}}
= \left(   \frac{\sin \theta}{2} \left(e^{-\zeta} + \frac{ e^{\zeta}}{{\cos}^{2} \theta} \right), \,  \frac{\cos \theta}{2}  \left(e^{-\zeta} - \frac{ e^{\zeta}}{{\cos}^{2} \theta} \right)  \Psi  , \,  \frac{i}{2} \left(e^{-\zeta} + \frac{ e^{\zeta}}{{\cos}^{2} \theta} \right)   \Psi, \,   1   \right). 
\]
We write $\zeta=u+iv$ and introduce the  coordinates $(U, V):=(u - \ln \left( \cos \theta \right), v)$.
Applying  reflection $\left( {\mathbf{x}}_{0}, {\mathbf{x}}_{1}, {\mathbf{x}}_{2}, {\mathbf{x}}_{3} \right)  \to 
\left( {\mathbf{x}}_{0}, - {\mathbf{x}}_{1}, -{\mathbf{x}}_{2}, {\mathbf{x}}_{3} \right)  $ and translation ${\mathbf{x}}_{3} \to {\mathbf{x}}_{3} - \ln \left( \cos \theta \right)$, the above patch recovers the desired conformal harmonic mapping ${\mathbf{X}}^{\tan \theta} (U, V)$. 
One can easily check that the level set ${\mathcal{C}}_{U_0}= {\Sigma}^{\tan \theta} \cap \{{\mathbf{x}}_{4}=U_0\}$ is congruent to the ellipse
\[
    {\left(  \frac{  \mathbf{x} }{ r_{1} }  \right)}^{2} +    {\left(  \frac{  \mathbf{y} }{ r_{2} }  \right)}^{2} =1, \quad \left( r_{1}, r_{2} \right) = \left(  \sqrt{   
    {\left( \frac{\sin \theta}{\cos \theta}  \sinh U \right) }^{2} +{\cosh}^{2} U \,  }, \,  \frac{1}{\cosh \theta}  \cosh U    \right).
\]
When $\vert U \vert \to \infty$ (or  $\,\vert \tanh U \vert \to 1$), the ellipse ${\mathcal{C}}_{U}$ should converge to a circle:
\[
\lim_{\vert U \vert  \to \infty} \frac{{r_{2}}^{2} }{{r_{1}}^{2}} = \lim_{\vert U \vert \to \infty} \frac{  \frac{ 1}{ {\cos}^{2} \theta}   }{    
    {\left( \frac{\sin \theta}{\cos \theta}  \tanh U \right) }^{2} +   1       }  = \frac{  \frac{ 1}{ {\cos}^{2} \theta}  }{  {\tan}^{2} \theta +1} =1.
\]
\end{proof}

\begin{example}[\textbf{Lagrangian catenoid in ${\mathbb{R}}^{4}$}] \label{Lcat} Rotating and dilating the Lagrangian catenoid
\[
\left\{ \left({\zeta}, \frac{1}{\zeta} \right) \in {\mathbb{C}}^{2}  \; \vert \;   \zeta \in  {\mathbb{C}} - \{0\} \; \right\} = 
\left\{ \left(e^{w}, e^{-w} \right) \in {\mathbb{C}}^{2}  \; \vert \;   w \in  {\mathbb{C}}   \right\}
\]  
we have the holomorphic curve in ${\mathbb{C}}^{2}$:
\[
 \left\{ \frac{1}{\sqrt{2}} \left(\frac{1}{\sqrt{2}} e^{w} -  \frac{1}{\sqrt{2}} e^{-w}, \frac{1}{\sqrt{2}} e^{w} + \frac{1}{\sqrt{2}} e^{-w}  \right)
= \left(  \sinh w, \cosh w \right) \in {\mathbb{C}}^{2}  \; \vert \;   w \in  {\mathbb{C}}  \right\},
\]  
which can be identified as a minimal surface in ${\mathbb{R}}^{4}$, with coordinates $(u, v)=\left(   \textrm{Re} \, w,  \textrm{Im} \, w   \right)$, 
\[
\Sigma 
=\left\{  {\mathbf{X}}(u, v) = \left(  \sinh u \cos v, \cosh u \sin v, \cosh u \cos v, \sinh u \sin v \right) \in {\mathbb{R}}^{4}  \; \vert \;   (u, v) \in  {\mathbb{R}}^{2}   \; \right\}.
\]  
We obtain two different families of planar curves on $\Sigma$: 
\begin{enumerate}
\item  Fixing $v=v_{0}$, the curve $u \mapsto {\mathbf{X}}(u, v_0)$ can be viewed as  
\[
u \mapsto {\mathbf{X}}(u, v_0) = \cosh u \left( 0,   \sin v_{0},   \cos v_{0}, 0 \right) + \sinh u \left(  \cos v_{0}, 0,  0,  \sin v_{0}\right),
\] 
which is congruent to the hyperbola ${\mathbf{x}}^{2} - {\mathbf{y}}^{2}=1$, as unit vectors $\left( 0,   \sin v_{0},   \cos v_{0}, 0 \right)$ and 
$ \left(  \cos v_{0}, 0,  0,  \sin v_{0}\right)$ are orthogonal to each other.
\item Fixing $u=u_{0}$, the curve $v \mapsto {\mathbf{X}}(u_{0}, v)$ can be viewed as 
\[
v \mapsto  {\mathbf{X}}(u, v_0) = \cos v \left(  \sinh u_{0}, 0,   \cosh u_{0}, 0 \right) + \sinh u \left( 0,  \cosh u_{0},  0,  \sinh u_{0} \right),
\] 
which is congruent to the circle ${\mathbf{x}}^{2} + {\mathbf{y}}^{2}={\cosh}^{2} u_{0}  +{\sinh}^{2} u_{0}$, as two orthogonal  vectors $\left(  \sinh u_{0}, 0,   \cosh u_{0}, 0 \right) $ and $ \left( 0,  \cosh u_{0},  0,  \sinh u_{0} \right)$ have the same length $\sqrt{{\cosh}^{2} u_{0}  +{\sinh}^{2} u_{0}}$.
\end{enumerate}
More generally, one can easily check that, given constants ${\lambda}_{1}, {\lambda}_{2}, {\lambda}_{3}, {\lambda}_{4} \in \mathbb{R}$,  the holomorphic 
curve $\left\{ \left(  {\lambda}_{1}  e^{w} + {\lambda}_{2}  e^{-w},  {\lambda}_{3}  e^{w} + {\lambda}_{4}  e^{-w} \right) \in {\mathbb{C}}^{2}  \; \vert \;   w \in  {\mathbb{C}} \right\}$ admit similar properties. 
\end{example}

\begin{example}[\textbf{Minimal surfaces in ${\mathbb{R}}^{4}$ foliated by conic sections with eccentricity $1$: parabolas}] \label{complex parabola} Let $\mu \in \mathbb{C}-\{0\}$ be a constant. We shall see that the holomorphic curve $z= \mu w^2$ in ${\mathbb{C}}^{2}$ becomes a minimal surface ${\Sigma}^{\mu}$ in ${\mathbb{R}}^{4}$ foliated by a family of parabolas $\mathbf{y}= \lambda   \mathbf{x}^2$ with $\lambda \in \left(0, \vert \mu \vert \right)$.  
With the frame ${\mathbf{E}}_{1}=(1,0,0,0)$, ${\mathbf{E}}_{2}=(0,1,0,0)$, ${\mathbf{E}}_{3}=(0,0,1,0)$, ${\mathbf{E}}_{4}=(0,0,0,1)$, we prepare the  immersion ${\mathbf{X}}^{\mu}$ of the complex parabola  ${\Sigma}^{\mu}$, with the conformal coordinate  $\zeta= u+iv \in \mathbb{C}:$
\[
  {\mathbf{X}}^{\mu} (u, v) =  x_1 {\mathbf{E}}_{1} + x_2 {\mathbf{E}}_{2} + x_3 {\mathbf{E}}_{3} x_3 +  x_4 {\mathbf{E}}_{4}= \textrm{Re} \,  {\zeta}  \,  {\mathbf{E}}_{1} +   \textrm{Im}\,  {\zeta}  \, {\mathbf{E}}_{2} +  \textrm{Re} \, \left(  \mu {\zeta}^{2} \right)  {\mathbf{E}}_{3}+  \textrm{Im} \, \left(  \mu {\zeta}^{2} \right)  {\mathbf{E}}_{4}.
  \]
  Fix $u=u_0$. Let's look at the slice ${\mathcal{C}}_{u_0} = {\Sigma}^{\mu} \cap \{x_1 = u_{0}\}$ given by $\mathbf{c}(v)={\mathbf{X}}^{\mu} (u_0, v)$. 
  Writing $(a, b)=\left(  \textrm{Re} \, \mu,  \textrm{Im} \, \mu   \right) \neq (0, 0)$ and introducing the new orthonormal frame
\begin{eqnarray*}
  {\mathbf{e}}_1  &=&   {\mathbf{E}}_{1} , \\
  {\mathbf{e}}_2  &=&    {\mathbf{e}}_2 (u_{0})  =  \frac{1 }{\sqrt{ 1 + 4 \left( {a}^{2}+{b}^{2}  \right) {u_{0}}^{2}  \,     }}  \left( {\mathbf{E}}_{2} -2b {\mathbf{E}}_{3} +2a {\mathbf{E}}_{4} \right) ,  \\
   {\mathbf{e}}_3   &=&     {\mathbf{e}}_3 (u_{0})  =  \frac{1 }{\sqrt{  {a}^{2}+{b}^{2}  }}  \left(-a  {\mathbf{E}}_{3} - b {\mathbf{E}}_{4}  \right) ,  \\
    {\mathbf{e}}_4   &=&  {\mathbf{e}}_4 (u_{0})  =  \frac{1 }{\sqrt{   \left( {a}^{2}+{b}^{2}  \right) \left( 1 + 4 {u_{0}}^{2}  \right) \,     }}  \left( {\mathbf{E}}_{2} -2b {\mathbf{E}}_{3} +2a {\mathbf{E}}_{4} \right),  \\
\end{eqnarray*}
we rewrite the patch of the level set curve ${\mathcal{C}}_{u_0}$:
\[
\mathbf{c}(v)=  u_0 {\mathbf{e}}_{1}  + v \sqrt{ 1 + 4 \left( {a}^{2}+{b}^{2}  \right) {u_{0}}^{2}  \,     }  \,  {\mathbf{e}}_2
+ \left( v^2  - {u_{0}}^{2} \right) \sqrt{  {a}^{2}+{b}^{2}  } \, {\mathbf{e}}_{3}.
\]
This indicates that the slice ${\mathcal{C}}_{u_0}$ is congruent to the planar curve 
\[
  (\mathbf{x}(v), \mathbf{y}(v)) = (v \sqrt{ 1 + 4 \left( {a}^{2}+{b}^{2}  \right) {u_{0}}^{2}  \,     } , \left( v^2  - {u_{0}}^{2} \right) \sqrt{  {a}^{2}+{b}^{2}  }  \, ).
\]
This curve represents the parabola
$ \mathbf{y} = \frac{ \sqrt{  {a}^{2}+{b}^{2}  }  }{   1 + 4 \left( {a}^{2}+{b}^{2}  \right) {u_{0}}^{2}     } { \mathbf{x}}^2 - 
 {u_{0}}^{2}\sqrt{  {a}^{2}+{b}^{2}  }$.
\end{example}

\begin{remark}
D. Joyce \cite{Joyce2001} constructed special Lagrangian submanifolds in ${\mathbb{R}}^{2n}$ evolving quadrics. In the case when $n=2$, those examples 
become holomorphic curves in  ${\mathbb{C}}^{2}$.
\end{remark}

\begin{remark}
 Motivated by M. Shiffman's theorems  for minimal surfaces in ${\mathbb{R}}^{3}$ bounded by two convex curves  \cite{Sh56}, F. L\'{o}pez, R. L\'{o}pez, and R. Souam  \cite{LLS2000} generalized  Riemann's minimal surfaces  \cite{MP2015, Riem1868} in ${\mathbb{R}}^{3}$ foliated by circles and lines  to maximal surfaces in Lorentz-Minkowski space ${\mathbb{L}}^{3}=\left( {\mathbb{R}}^{3}, \, d{x_{1}}^2 +d{x_{2}}^2 -d{x_{3}}^2 \right)$ foliated by conic sections.   Unlike Euclidean space,  since Lorentz-Minkowski space ${\mathbb{L}}^{3}$ admits three different rotational isometries (elliptic, hyperbolic, parabolic rotations), there exist fruitful examples of maximal surfaces foliated by conic sections. 
 \end{remark}

\section*{Acknowledgements}

The author would like to thank Marc Soret and Marina Ville for enlightening discussion. Part of this work was carried out while he was visiting Universit\'e Francois Rabelais in December, 2016. He would like to warmly thank M. S. and M. V. for their hospitality and support.

\bigskip


\begin{thebibliography}{00} 

\bibitem{BB2011}
J. Bernstein, C. Breiner, \textit{Symmetry of embedded genus 1 helicoids}, Duke Math. J. \textbf{159} (2011), no. 1, 83--97. 

\bibitem{Cal53} E. Calabi, \textit{Isometric imbeddings of complex manifolds}, Ann. of Math. \textbf{58} (1953), 1--23.

\bibitem{CU1999} I. Castro, F. Urbano, \textit{On a minimal Lagrangian submanifold of  ${\mathbb{C}}^{n}$ foliated by spheres}, 
Michigan Math. J. \textbf{46} (1999), no. 1, 71--82.

\bibitem{Deu2013}
M. Deutsch, \textit{Integrable deformation of critical surfaces in spaceforms}, Bull. Braz. Math. Soc. (N.S.) \textbf{44} (2013), no. 1, 1--23. 

\bibitem{DHS2010}
U. Dierkes, S. Hildebrandt, F. Sauvigny, \textit{Minimal Surfaces}, Grundlehren der mathematischen Wissenschaften, vol. 339. Springer, Berlin (2010)

 \bibitem{DT09} 
M.  Dajczer, R. Tojeiro, \textit{All superconformal surfaces in ${\mathbb{R}}^{4}$ in terms of minimal surfaces}, Math. Z. \textbf{261} (2009), no. 4, 869--890. 
 
\bibitem{G1887}
E. Goursat,  \textit{Sur un mode de transformation des surfaces minima}, Acta Math. \textbf{11} (1887), 135--186.

\bibitem{HL1982}
R. Harvey, H. B. Lawson, \textit{Calibrated geometries}, Acta Math. \textbf{148} (1982), 47--157.

\bibitem{Joyce2001}
D. Joyce, \textit{Constructing special Lagrangian m-folds in  ${\mathbb{C}}^{m}$ by evolving quadrics}, Math. Ann. 
\textbf{320} (2001), no. 4, 757--797. 
 
\bibitem{LK2016}
K. Leschke, K. Moriya, \textit{Applications of quaternionic holomorphic geometry to minimal surfaces}, Complex Manifolds \textbf{3} (2016), no. 1, 
282--300

\bibitem{Law71} H. B. Lawson, \textit{Some intrinsic characterizations of minimal surfaces}, J. Analyse Math. \textbf{24} (1971), 151--161. 

\bibitem{LLS2000}
F. J. L\'{o}pez, R. L\'{o}pez, R. Souam, \textit{Maximal surfaces of Riemann type in Lorentz-Minkowski space ${\mathbb{L}}^{3}$}, Mich. J. Math. 
\textbf{47} (2000), 469--497.

\bibitem{LR91} F. J. L\'{o}pez, A. Ros, \textit{On embedded complete minimal surfaces of genus zero}, J. Differential Geom. \textbf{33} (1991), 293--300.  

\bibitem{MB2000} I. Mladenov, A. Borislav, \textit{Deformations of minimal surfaces}, Geometry, Integrability and Quantization (Varna, 1999). Coral Press Sci. Publ., Sofia (2000).

\bibitem{MeRo05} W. H. Meeks III, H. Rosenberg, \textit{The uniqueness of the helicoid}, Ann. of Math. (2) \textbf{161} (2005), no. 2, 727--758. 

  \bibitem{Mor09}
K. Moriya, \textit{Super-conformal surfaces associated with null complex holomorphic curves}, Bull. Lond. Math. Soc. \textbf{41} (2009), no. 2, 327--331.
 
\bibitem{MM2015} A. Moroianu, S.  Moroianu, \textit{Ricci surfaces}, Ann. Sc. Norm. Super. Pisa Cl. Sci. (5) \textbf{14} (2015), no. 4, 1093--1118. 

\bibitem{MP03} P. Mira, J. A. Pastor, \textit{Helicoidal maximal surfaces in Lorentz-Minkowski space}, 
Monatsh. Math. \textbf{140} (2003), no. 4, 315--334.

 \bibitem{MP2012} 
W. H. Meeks III, J. P\'{e}rez, \textit{A Survey on Classical Minimal Surface Theory}, University Lecture Series, vol. 60. AMS, Providence (2012).
 
\bibitem{MP2015}
W. H. Meeks III, J. P\'{e}rez, \textit{The Riemann minimal examples}, The legacy of Bernhard Riemann after one hundred and fifty years, Advanced Lectures in Mathematics, \textbf{35} (2015), 417--457.

\bibitem{HO80} D. A. Hoffman, R. Osserman, \textit{The geometry of the generalized Gauss map},
Mem. Amer. Math. Soc. \textbf{28} (1980), no. 236. 

\bibitem{Oss86} R. Osserman, \textit{A survey of minimal surfaces}. Second edition. Dover Publications, Inc., New York, 1986. 
 
 \bibitem{Park2015}
 S.-H. Park, \textit{Circle-foliated minimal surfaces in $4$-dimensional space forms}, Bull. Korean Math. Soc. 
\textbf{52} (2015), no. 5, 1433--1443.

 \bibitem{PR1993}  J. P\'{e}rez, A. Ros,
\textit{Some uniqueness and nonexistence theorems for embedded minimal surfaces}, Math. Ann. \textbf{295} (1993), 513--525.
 
\bibitem{PR2002}  J. P\'{e}rez, A. Ros, \textit{Properly embedded minimal surfaces with finite total curvature}, The Global Theory of Minimal Surfaces in Flat Spaces (Martina Franca, 1999), Lecture Notes in Math., vol. 1775, Springer-Verlag, Berlin, 2002, pp. 15--66.

 \bibitem{Riem1868} 
  B. Riemann, \textit{\"{U}ber die Fl\"{a}chen vom Kleinsten Inhalt be gegebener Begrenzung}, Abh. K\"{o}nigl. Ges. Wiss. G\"{o}ttingen, 
  Math. Kl. \textbf{13} (1868), 329--333.
  
  \bibitem{Romon1997}
  P. Romon, \textit{Symmetries and conserved quantities for minimal surfaces}, preprint (1997).
  
  \bibitem{Ros1996}
  A. Ros, \textit{Embedded minimal surfaces: forces, topology and symmetries}, Calc. Var. Partial Differential Equations, \textbf{4} (1996), 469--496.
  
\bibitem{Sch83} R. Schoen, \textit{Uniqueness, symmetry, and embeddedness of minimal surfaces}, J. Differential Geom. \textbf{18} (1983), 791--809. 

 
\bibitem{Sh56}
M. Shiffman, \textit{On surfaces of stationary area bounded by two circles, or convex curves, in parallel planes}, 
Ann. of Math. \textbf{63} (1956), 77--90.


\end{thebibliography}
\end{document}